\newtheorem{theorem}{Theorem}[section]
\newtheorem{corollary}[theorem]{Corollary}
\newtheorem{proposition}[theorem]{Proposition}
\theoremstyle{definition}
\newtheorem{definition}[theorem]{Definition}
\newtheorem{example}[theorem]{Example}
\theoremstyle{remark}
\newtheorem{remark}[theorem]{Remark}
\numberwithin{equation}{section}
\def\@biblabel#1{#1.}
\begin{document}

\title[Nonsymmetric and Symmetric Fractional Calculi]{Nonsymmetric
and Symmetric Fractional Calculi on Arbitrary Nonempty Closed Sets}

\thanks{This is a preprint of a paper whose final and definite form
will be published in \emph{Mathematical Methods in the Applied Sciences},
ISSN 0170-4214. Submitted 05/March/2014; revised 24/Feb/2015; accepted 25/Feb/2015.}

\author[N. Benkhettou]{Nadia Benkhettou}
\address{Nadia Benkhettou,
Laboratoire de Math\'{e}matiques,
Universit\'{e} de Sidi Bel-Abb\`{e}s,
B.P. 89, 22000 Sidi Bel-Abb\`{e}s, Algerie}
\email{benkhettou\_na@yahoo.fr}

\author[A. M. C. Brito da Cruz]{Artur M. C. Brito da Cruz}
\address{Artur M. C. Brito da Cruz,
Escola Superior de Tecnologia de Set\'{u}bal, Estefanilha, 2910-761
Set\'{u}bal, Portugal \and
Center for Research and Development in Mathematics and Applications (CIDMA),
University of Aveiro, 3810--193 Aveiro, Portugal}
\email{artur.cruz@estsetubal.ips.pt}

\author[D. F. M. Torres]{Delfim F. M. Torres}
\address{Delfim F. M. Torres,
Center for Research and Development in Mathematics and Applications (CIDMA),
Department of Mathematics, University of Aveiro, 3810--193 Aveiro, Portugal}
\email{delfim@ua.pt}

% -------------------------------------------------

\subjclass[2010]{26A33; 26E70.}
\date{}

% -------------------------------------------------

\begin{abstract}
We introduce a nabla, a delta, and a symmetric fractional calculus
on arbitrary nonempty closed subsets of the real numbers. These
fractional calculi provide a study of differentiation and integration
of noninteger order on discrete, continuous, and hybrid settings.
Main properties of the new fractional operators are investigated,
and some fundamental results presented, illustrating the interplay
between discrete and continuous behaviors.
\end{abstract}

\keywords{Discrete and continuous fractional calculi;
Nonsymmetric and symmetric fractional calculi;
Time scales}

\maketitle

% -------------------------------------------------

\section{Introduction}

The notion of \emph{derivative} is at the core of any calculus.
One can interpret the derivative in a geometrical way, as the slope of a curve,
or, physically, as a rate of change. But what if we generalize
the notion of derivative and we study the limit
$$
\lim_{s\rightarrow t}\frac{f(s)-f(t)}{(s-t)^{\alpha}}
$$
for $\alpha \in ]0,1]$ (derivative of order $\alpha$)?
In this paper we discuss this question
in the general framework of the calculus on time scales,
which might be best understood as the continuum bridge
between discrete time and continuous time theories,
offering a rich formalism for studying hybrid
discrete-continuous dynamical systems
\cite{Bohner:1,Bohner:2,MR2994055}.
The particular case of the $q$-scale fractional
calculus is well studied in the literature: see
\cite{MR0247389,MR2963764,MR2350094} and references therein.
Here we are interested to deal with arbitrary time scales.

A time scale is a model of time, where the continuous and the discrete
are considered and merged into a single theory.
Time scales were first introduced by Aulbach and Hilger in 1988 \cite{Hilger}.
Applications in many different fields that require simultaneous
modeling of discrete and continuous data are available \cite{Ck,Dryl,Sheng}.

In order to define an inverse operator of our new derivative, the antiderivative,
we apply some ideas from fractional calculus, which is a branch of mathematical
analysis that studies the possibility of taking real number powers
of the differentiation operator \cite{book:Benchohra,Kilbas,Samko}.
The fractional calculus goes back to Leibniz (1646--1716) himself.
However, it was only in the last 20 years that fractional
calculus has gained an increasingly attention of researchers.
In October 2009, Science Watch of Thomson Reuters identified it as an
\textit{Emerging Research Front} and gave an award
to Metzler and Klafter for their paper \cite{Metzler}.
Here we consider fractional calculus in the more general setting of time scales.
The study of fractional calculus on arbitrary time scales was introduced in the PhD thesis
of Bastos \cite{PhD:Bastos,MR2728463,MyID:179,MR2800417}
and is now subject of strong current research: see, e.g.,
\cite{MyID:296,MR2809039,MR3243775,MR2798773,MR3178922,MR3270279}.
However, to the best of our knowledge, all the previous results
refer to nonsymmetric fractional calculi. Here we introduce a
general symmetric fractional calculus on time scales. For the
importance to study such a symmetric calculus we refer the reader
to \cite{MR3110279,MR3110294,MR3031158}.

The paper is organized as follows. In Section~\ref{Preliminaries}
we present the basic notions and necessary results to what follows.
Our results appear in Section~\ref{Main Results}. We begin
to define and develop the nonsymmetric fractional calculus (Section~\ref{NSFC}).
In order to do that, we define the nabla fractional derivative
and the nabla fractional integral of order $\alpha \in ]0,1]$. Then,
in Section~\ref{SFC}, we introduce and develop the symmetric fractional calculus.
We end with Section~\ref{sec:Conc} of conclusions and future work.

% -------------------------------------------------

\section{Preliminaries}
\label{Preliminaries}

A nonempty closed subset of $\mathbb{R}$ is called a time scale and is
denoted by $\mathbb{T}$. We assume that a time scale has the topology
inherited from $\mathbb{R}$ with the standard topology. Two jump operators
are considered: the forward jump operator $\sigma :\mathbb{T}\rightarrow
\mathbb{T}$, defined by $\sigma \left( t\right) :=\inf \left\{ s\in \mathbb{T}:
s>t\right\} $ with $\inf \emptyset =\sup \mathbb{T}$ (i.e., $\sigma \left(
M\right) =M$ if $\mathbb{T}$ has a maximum $M$), and the backward jump
operator $\rho :\mathbb{T}\rightarrow \mathbb{T}$ defined by $\rho \left(
t\right) :=\sup \left\{ s\in \mathbb{T}:s<t\right\} $ with $\sup \emptyset
=\inf \mathbb{T}$ (i.e., $\rho \left( m\right) =m$ if $\mathbb{T}$ has a
minimum $m$). A point $t\in \mathbb{T}$ is said to be right-dense,
right-scattered, left-dense or left-scattered if $\sigma \left( t\right) =t$,
$\sigma \left( t\right) >t$, $\rho \left( t\right) =t$ or $\rho \left(
t\right) <t$, respectively. A point $t\in \mathbb{T}$ is dense if it is
right and left dense; isolated if it is right and left scattered. If $\sup
\mathbb{T}$ is finite and left-scattered, then we define $\mathbb{T}^{\kappa
}:=\mathbb{T}\setminus \{\sup \mathbb{T}\}$, otherwise $\mathbb{T}^{\kappa
}:=\mathbb{T}$; if $\inf \mathbb{T}$ is finite and right-scattered, then
$\mathbb{T}_{\kappa }:=\mathbb{T}\setminus \{\inf \mathbb{T}\}$, otherwise
$\mathbb{T}_{\kappa }:=\mathbb{T}$. We set $\mathbb{T}_{\kappa }^{\kappa }
:=\mathbb{T}_{\kappa }\cap \mathbb{T}^{\kappa }$ and we denote $f\circ \sigma$
by $f^{\sigma }$ and $f\circ \rho $ by $f^{\rho }$.

In time scales it is common to define two derivatives: one using the forward jump
operator, the so-called delta derivative; the other using the backward jump
operator, known as the nabla derivative.

\begin{definition}[Delta derivative \cite{Bohner:1}]
We say that a function $f:\mathbb{T}\rightarrow \mathbb{R}$ is delta
differentiable at $t\in \mathbb{T}^{\kappa }$ if there exists a number
$f^{\Delta }\left( t\right)$ such that, for all $\varepsilon >0$, there
exists a neighborhood $U$ of $t$ such that
\begin{equation*}
\left\vert f^{\sigma }\left( t\right) -f\left( s\right) -f^{\Delta }\left(
t\right) \left( \sigma \left( t\right) -s\right) \right\vert \leq
\varepsilon \left\vert \sigma \left( t\right) -s\right\vert
\end{equation*}
for all $s\in U$. We call $f^{\Delta }\left( t\right) $ the delta derivative
of $f$ at $t$ and we say that $f$ is delta differentiable if $f$ is delta
differentiable for all $t\in \mathbb{T}^{\kappa }$.
\end{definition}

\begin{definition}[Nabla derivative \cite{Bohner:1}]
We say that a function $f:\mathbb{T}\rightarrow \mathbb{R}$ is nabla
differentiable at $t\in \mathbb{T}_{\kappa }$ if there exists a number
$f^{\nabla }\left( t\right)$ such that, for all $\varepsilon >0$, there
exists a neighborhood $V$ of $t$ such that
\begin{equation*}
\left\vert f^{\rho }\left( t\right) -f\left( s\right) -f^{\nabla }\left(
t\right) \left( \rho \left( t\right) -s\right) \right\vert \leq \varepsilon
\left\vert \rho \left( t\right) -s\right\vert
\end{equation*}
for all $s\in V$. We call $f^{\nabla }\left( t\right) $ the nabla derivative
of $f$ at $t$ and we say that $f$ is nabla differentiable if $f$ is nabla
differentiable for all $t\in \mathbb{T}_{\kappa }$.
\end{definition}

A third derivative, the symmetric derivative on time scales, can be seen,
under certain assumptions, as a generalization of both the nabla and delta
derivatives. The symmetric calculus on time scales was recently proposed
and investigated in \cite{Cruz:1,Cruz:2,Cruz:3}. We refer the reader to these references
for the motivation to study the symmetric calculus and for a deep understanding
of the theory.

\begin{definition}[See \cite{Cruz:2}]
\label{def:sc}
We say that a function $f:\mathbb{T}\rightarrow \mathbb{R}$
is symmetric continuous at $t\in $ $\mathbb{T}$
if, for any $\varepsilon >0$, there exists a neighborhood
$U_{t}\subset \mathbb{T}$ of $t$ such that, for all
$s\in U_{t}$ for which $2t-s\in U_{t}$,
one has $\left\vert f\left( s\right)
-f\left( 2t-s\right) \right\vert \leq \varepsilon$.
\end{definition}

Continuity implies symmetric continuity but
the reciprocal is not true \cite{Cruz:2}.

\begin{definition}[See \cite{Cruz:2}]
Let $f:\mathbb{T}\rightarrow \mathbb{R}$ and $t\in \mathbb{T}_{\kappa}^{\kappa }$.
The symmetric derivative of $f$ at $t$, denoted by
$f^{\diamondsuit }\left( t\right)$, is the real number, provided it exists,
with the property that, for any $\varepsilon >0$, there exists a
neighborhood $U\subset \mathbb{T}$ of $t$ such that
\begin{multline*}
\left\vert \left[ f^{\sigma }\left( t\right) -f\left( s\right) +f\left(
2t-s\right) -f^{\rho }\left( t\right) \right] -f^{\diamondsuit }\left(
t\right) \left[ \sigma \left( t\right) +2t-2s-\rho \left( t\right) \right]
\right\vert \\
\leq \varepsilon \left\vert \sigma \left( t\right) +2t-2s-\rho
\left( t\right) \right\vert
\end{multline*}
for all $s\in U$ for which $2t-s\in U$. A function $f$ is said to be
symmetric differentiable provided $f^{\diamondsuit }\left( t\right) $ exists
for all $t\in \mathbb{T}_{\kappa }^{\kappa }$.
\end{definition}

In time scales one uses special classes of functions
that guarantee the existence of an antiderivative.

\begin{definition}
A function $f:\mathbb{T}\rightarrow \mathbb{R}$ is called regulated provided
its right-sided limit exist (finite) at all right-dense points in $\mathbb{T}
$ and its left-sided limits exist (finite) at all left-dense points in
$\mathbb{T}$.
\end{definition}

\begin{definition}
A function $f:\mathbb{T}\rightarrow \mathbb{R}$ is called rd-continuous
provided it is continuous at right-dense points in $\mathbb{T}$ and its
left-sided limits exist (finite) at left-dense points in $\mathbb{T}$. The
set of rd-continuous functions $f:\mathbb{T}\rightarrow \mathbb{R}$ is
denoted by $\mathcal{C}_{rd}$. Analogously, a function
$f:\mathbb{T}\rightarrow \mathbb{R}$ is called ld-continuous if it is continuous at all
left-dense points and if its right-sided limits exist and are finite at all
right-dense points. We denote the set of all ld-continuous functions by
$\mathcal{C}_{ld}$.
\end{definition}

\begin{theorem}[See \cite{Bohner:1,Bohner:2}]
Every ld-continuous function $f:\mathbb{T}\rightarrow \mathbb{R}$ has a
nabla antiderivative. In particular, if $t_{0}\in \mathbb{T}$, then $F$
defined by
\begin{equation*}
F\left( t\right) :=\int_{t_{0}}^{t}f\left( \tau \right) \nabla \tau \ \
\text{for}\ \ t\in \mathbb{T}
\end{equation*}
is a nabla antiderivative x
of $f$.
\end{theorem}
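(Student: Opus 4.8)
The plan is to transport to the nabla setting the classical three-step argument that establishes the analogous statement for the delta derivative in \cite{Bohner:1,Bohner:2}. The first, elementary, step is to note that an ld-continuous $f$ is regulated: at a left-dense point $f$ is continuous, so its left-sided limit exists; at a right-dense point the right-sided limit exists by the very definition of ld-continuity; and at scattered points there is nothing to check. Thus it suffices to construct a nabla antiderivative for a merely regulated $f$ and then to sharpen the conclusion using the extra ld-continuity hypothesis.

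The second step — the technical heart of the argument — is the construction of a \emph{nabla pre-antiderivative} of a regulated $f$: a continuous $F:\mathbb{T}\to\mathbb{R}$ together with a set $D\subseteq\mathbb{T}_{\kappa}$ whose complement in $\mathbb{T}_{\kappa}$ is countable and contains no left-scattered point, such that $F$ is nabla differentiable on $D$ with $F^{\nabla}=f$ there. I would argue locally. On each compact piece $[a,b]\cap\mathbb{T}$ one first shows that $f$ is a uniform limit of ld-step functions (functions constant on finitely many consecutive blocks determined by points of $\mathbb{T}$); each step function has an explicit, piecewise affine nabla antiderivative, and uniform convergence of the integrands forces uniform convergence of these explicit antiderivatives to a continuous limit $F$, with $F^{\nabla}=f$ everywhere outside the countable set obtained by accumulating the block endpoints (the at most countably many left-scattered points of $\mathbb{T}$ can be kept inside $D$, since over a left-scattered atom the increment of the limiting antiderivative is $f(t)\,(t-\rho(t))$). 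Exhausting $\mathbb{T}$ by such compact pieces and gluing them, after adjusting by additive constants on the overlaps, yields the global pair $(F,D)$. Proving the uniform-approximation lemma for regulated functions and controlling the exceptional countable set are where the real work lies.

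The third step upgrades the pre-antiderivative to an honest antiderivative under the ld-continuity hypothesis, i.e.\ shows $F^{\nabla}(t)=f(t)$ for \emph{every} $t\in\mathbb{T}_{\kappa}$. At the countably many left-dense $t\notin D$ one runs a squeeze argument: because $\mathbb{T}_{\kappa}\setminus D$ contains no left-scattered point, points of $D$ approach $t$ from the left within $\mathbb{T}$; a nabla mean-value inequality applied to $F$ on $D$ sandwiches the difference quotients of $F$ based at $t$ between values $f(\tau)$ with $\tau\in D$ arbitrarily near $t$, and continuity of $f$ at $t$ makes these values arbitrarily close to $f(t)$, forcing $F^{\nabla}(t)=f(t)$. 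At the remaining points, which lie in $D$, the identity holds by construction; hence $F$ is a nabla antiderivative of $f$, which proves the first assertion.

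Finally, for the displayed formula one appeals to the usual normalization. A nabla mean value theorem — an immediate consequence of the definition of nabla differentiability — implies that any two nabla antiderivatives of $f$ differ by a constant, so the Cauchy nabla integral $\int_{t_{0}}^{t}f(\tau)\,\nabla\tau:=G(t)-G(t_{0})$, defined through any antiderivative $G$, does not depend on the choice of $G$. The function $F(t)=\int_{t_{0}}^{t}f(\tau)\,\nabla\tau$ so obtained is continuous, satisfies $F(t_{0})=0$, and has $F^{\nabla}=f$ by the previous steps; it is therefore a nabla antiderivative of $f$, as claimed.
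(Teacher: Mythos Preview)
The paper does not supply a proof of this theorem at all: it is listed in the \emph{Preliminaries} section as a known result, with citation to \cite{Bohner:1,Bohner:2} and no accompanying argument. There is therefore no ``paper's own proof'' against which to compare your proposal.

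That said, your sketch is a faithful nabla transcription of the standard Bohner--Peterson delta argument (regulated $\Rightarrow$ pre-antiderivative via uniform approximation by step functions, then the ld-continuity hypothesis to eliminate the countable exceptional set, then normalization), which is precisely what the cited references do. As a proof outline it is sound; the only places that would need real care in a full write-up are the uniform approximation lemma on a compact time-scale interval and the mean-value/squeeze step at the exceptional left-dense points, both of which you correctly flag as the substantive work.
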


For the notions of nabla and delta integrals and their generalizations,
we refer the reader to \cite{Bohner:1,MR2562284,MR2601883}.

% -------------------------------------------------

\section{Main Results}
\label{Main Results}

We develop two types of fractional calculi on arbitrary time scales: nonsymmetric
(Section~\ref{NSFC}) and symmetric (Section~\ref{SFC}). The new calculi provide,
as particular cases, discrete, quantum, continuous and hybrid fractional derivatives
and integrals.

% -------------------

\subsection{Nonsymmetric fractional calculus}
\label{NSFC}

We begin by introducing a new notion: the nabla fractional derivative of
order $\alpha \in ]0,1]$ for functions defined on arbitrary time scales. For
$\alpha =1$ we obtain the usual nabla derivative of the time-scale calculus.
Let $\mathbb{T}$ be a time scale, $t\in \mathbb{T}$, and $\delta >0$. We
define the right $\delta $-neighborhood of $t$ as $\mathcal{U}^{+}:=\left[
t,t+\delta \right[ \cap \mathbb{T}$ and the left $\delta$-neighborhood of
$t$ as $\mathcal{U}^{-}:=\left] t-\delta ,t\right] \cap \mathbb{T}$.

\begin{definition}[The nabla fractional derivative]
Let $f:\mathbb{T}\rightarrow \mathbb{R}$, $t\in \mathbb{T}_{k}$.
For $\alpha \in ]0,1]\cap \left\{1/q : q\text{ is an odd number}\right\}$
(resp. $\alpha \in ]0,1]\setminus \left\{ 1/q:q\text{ is an odd number}\right\}$)
we define $f^{\nabla^{\alpha}}(t)$ to be the number
(provided it exists) with the property that, given any $\varepsilon >0$,
there is a $\delta $-neighborhood $\mathcal{U}\subset \mathbb{T}$ of $t$
(resp. right $\delta $-neighborhood $\mathcal{U}^{+}\subset \mathbb{T}$ of
$t$), $\delta >0$, such that
\begin{equation*}
\left\vert \left[ f(s)-f^{\rho }(t))\right] -f^{\nabla ^{\alpha }}(t)\left[
s-\rho (t)\right] ^{\alpha }\right\vert \leq \varepsilon \left\vert s-\rho
(t)\right\vert ^{\alpha }
\end{equation*}
for all $s\in \mathcal{U}$ (resp. $s\in \mathcal{U}^{+}$). We call
$f^{\nabla ^{\alpha }}(t)$ the nabla fractional derivative of $f$ of order
$\alpha$ at $t$.
\end{definition}

Throughout the paper we only consider the nonsymmetric fractional
calculus as the calculus derived from the nabla fractional derivative.
However, we could also consider the delta fractional calculus
associated with the delta fractional derivative.

\begin{definition}[The delta fractional derivative]
Let $f:\mathbb{T}\rightarrow \mathbb{R}$, $t\in \mathbb{T}^{\kappa }$.
For $\alpha \in ]0,1]\cap \left\{ 1/q:q\text{ is an odd
number}\right\}$ (resp. $\alpha \in ]0,1]\setminus \left\{ 1/q:q\text{ is an
odd number}\right\} $) we define $f^{\Delta ^{\alpha }}(t)$ to be the number
(provided it exists) with the property that, given any $\varepsilon >0$,
there is a $\delta $-neighborhood $\mathcal{U}\subset \mathbb{T}$ of $t$
(resp. left $\delta $-neighborhood $\mathcal{U}^{-}\subset \mathbb{T}$ of $t$),
$\delta >0$, such that
\begin{equation*}
\left\vert \left[ f^{\sigma }(t)-f(s)\right] -f^{\Delta^{\alpha}}(t)\left[
\sigma (t)-s\right]^{\alpha }\right\vert \leq \varepsilon \left\vert \sigma
(t)-s\right\vert ^{\alpha }
\end{equation*}
for all $s\in \mathcal{U}$ (resp. $s\in \mathcal{U}^{-}$). We call
$f^{\Delta^{\alpha}}(t)$ the delta fractional derivative of $f$ of order
$\alpha$ at $t$.
\end{definition}

Along the text $\alpha$ is a real number in the interval $]0,1]$.
The next theorem provides some useful properties of the nabla
fractional derivative on time scales.

\begin{theorem}
\label{T1}Assume $f:\mathbb{T}\rightarrow \mathbb{R}$ and let
$t\in \mathbb{T}_{k}$. The following properties hold:

\begin{enumerate}
\item[(i)] Let $\alpha \in ]0,1]\cap \left\{ \frac{1}{q}:q\text{ is an odd
number}\right\} $. If $t$ is left-dense and if $f$ is nabla fractional
differentiable of order $\alpha $ at $t$, then $f$ is continuous at $t$.

\item[(ii)] Let $\alpha \in ]0,1]\setminus \left\{ \frac{1}{q}:q\text{ is an
odd number}\right\} $. If $t$ is left-dense and if $f$ is nabla fractional
differentiable of order $\alpha $ at $t$, then $f$ is right-continuous at $t$.

\item[(iii)] If $f$ is continuous at $t$ and $t$ is left-scattered, then $f$
is nabla fractional differentiable of order $\alpha $ at $t$ with
\begin{equation*}
f^{\nabla ^{\alpha }}(t)=\frac{f(t)-f^{\rho }(t)}{\left[ t-\rho (t)\right]
^{\alpha }}.
\end{equation*}

\item[(iv)] Let $\alpha \in ]0,1]\cap \left\{ \frac{1}{q}:q\text{ is an odd
number}\right\} $. If $t$ is left-dense, then $f$ is nabla fractional
differentiable of order $\alpha $ at $t$ if, and only if, the limit
\begin{equation*}
\lim_{s\rightarrow t}\frac{f(s)-f(t)}{(s-t)^{\alpha }}
\end{equation*}
exists as a finite number. In this case,
\begin{equation*}
f^{\nabla ^{\alpha }}(t)=\lim_{s\rightarrow t}\frac{f(s)-f(t)}{(s-t)^{\alpha
}}.
\end{equation*}

\item[(v)] Let $\alpha \in ]0,1]\setminus \left\{ \frac{1}{q}:q\text{ is an
odd number}\right\} $. If $t$ is left-dense, then $f$ is nabla fractional
differentiable of order $\alpha $ at $t$ if, and only if, the limit
\begin{equation*}
\lim_{s\rightarrow t^{+}}\frac{f(s)-f(t)}{(s-t)^{\alpha}}
\end{equation*}
exists as a finite number. In this case,
\begin{equation*}
f^{\nabla ^{\alpha }}(t)
=\lim_{s\rightarrow t^{+}}\frac{f(s)-f(t)}{(s-t)^{\alpha}}.
\end{equation*}

\item[(vi)] If $f$ is nabla fractional differentiable
of order $\alpha $ at $t$, then
$$
f(t)=f^{\rho }(t)+\left[ t-\rho(t)\right]^{\alpha}
f^{\nabla^{_{^{\alpha }}}}(t).
$$
\end{enumerate}
\end{theorem}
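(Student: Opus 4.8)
The plan is to mimic the classical proof of the analogous result for the ordinary nabla derivative on time scales (cf. \cite{Bohner:1}), adapting the estimates to accommodate the exponent $\alpha$ and the two-sided versus one-sided neighborhoods. Throughout I would work directly from the $\varepsilon$-$\delta$ definition of $f^{\nabla^\alpha}(t)$, fixing $\varepsilon>0$ and the associated $\delta$-neighborhood $\mathcal{U}$ (resp.\ $\mathcal{U}^+$), and exploit the defining inequality
\begin{equation*}
\bigl\vert [f(s)-f^\rho(t)] - f^{\nabla^\alpha}(t)[s-\rho(t)]^\alpha \bigr\vert \le \varepsilon \vert s-\rho(t)\vert^\alpha
\end{equation*}
together with the triangle inequality in the form $\vert f(s)-f^\rho(t)\vert \le (\vert f^{\nabla^\alpha}(t)\vert+\varepsilon)\vert s-\rho(t)\vert^\alpha$.

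For (i) and (ii), I would take $t$ left-dense, so $\rho(t)=t$ and $f^\rho(t)=f(t)$; the inequality above becomes $\vert f(s)-f(t)\vert \le (\vert f^{\nabla^\alpha}(t)\vert+\varepsilon)\vert s-t\vert^\alpha$ for $s$ in the relevant neighborhood, and letting $s\to t$ (two-sided in case (i), since then $\mathcal{U}$ is a genuine two-sided neighborhood; right-sided in case (ii), since only $\mathcal{U}^+$ is available) forces $f(s)\to f(t)$, giving continuity (resp.\ right-continuity). The one subtlety worth a sentence is why $(s-t)^\alpha$ is well-defined and tends to $0$: for $\alpha=1/q$ with $q$ odd the expression $(s-t)^\alpha$ makes sense for $s$ on both sides of $t$, which is exactly why the two-sided neighborhood is used in that case, whereas for other $\alpha$ one must stay to the right so that $s-t\ge 0$.

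For (iii), with $t$ left-scattered we have $t-\rho(t)>0$; I would simply verify that the candidate value $\dfrac{f(t)-f^\rho(t)}{[t-\rho(t)]^\alpha}$ satisfies the definition. Choosing $\delta$ small enough that $\mathcal{U}=\{t\}$ (possible since $t$ is left-scattered, and if $t$ is also right-scattered or right-dense one shrinks $\delta$ so the only point of $\mathbb{T}$ in the neighborhood is $t$ itself --- here continuity of $f$ at $t$ is what licenses handling the right side when $t$ is right-dense, by absorbing the small variation of $f$ into $\varepsilon$), the defining inequality at $s=t$ reduces to $0\le 0$, which holds trivially; a short argument then covers nearby $s$ on the right using continuity. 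Item (vi) is immediate by taking $s=t$ in the definition (or equivalently reading off (iii) when $t$ is left-scattered and noting the identity is vacuous, $0=0$, when $t$ is left-dense since then $f^\rho(t)=f(t)$ and the bracket vanishes): the inequality at $s=t$ gives $\vert f(t)-f^\rho(t)-f^{\nabla^\alpha}(t)[t-\rho(t)]^\alpha\vert\le \varepsilon\vert t-\rho(t)\vert^\alpha$ for every $\varepsilon>0$, whence equality.

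The heart of the theorem is (iv) and (v), the characterization at left-dense points via the limit $\lim_{s\to t}\frac{f(s)-f(t)}{(s-t)^\alpha}$ (two-sided for $\alpha=1/q$, $q$ odd; one-sided otherwise). For the "only if" direction I would show that if $f^{\nabla^\alpha}(t)$ exists then, since $\rho(t)=t$, the difference quotient satisfies $\bigl\vert \frac{f(s)-f(t)}{(s-t)^\alpha} - f^{\nabla^\alpha}(t)\bigr\vert \le \varepsilon$ for all $s\ne t$ in the neighborhood, which is precisely the statement that the limit exists and equals $f^{\nabla^\alpha}(t)$ --- here one needs the sign conventions so that $(s-t)^\alpha\ne 0$ and dividing is legitimate. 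For the "if" direction one runs the computation in reverse: given that the limit $L$ exists, for each $\varepsilon>0$ pick $\delta$ so that $\vert \frac{f(s)-f(t)}{(s-t)^\alpha}-L\vert\le\varepsilon$ on the punctured neighborhood, multiply through by $\vert s-t\vert^\alpha$, and note the inequality also holds trivially at $s=t$; this yields exactly the defining inequality for $f^{\nabla^\alpha}(t)=L$. The main obstacle --- really a bookkeeping obstacle rather than a deep one --- is keeping the case distinction on $\alpha$ consistent throughout: one must be careful that the neighborhood in the definition is two-sided precisely when $(s-t)^\alpha$ is defined for $s$ on both sides, and one-sided (right) otherwise, and that every manipulation dividing by or multiplying by $(s-t)^\alpha$ respects this, since otherwise the "if and only if" could fail. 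No genuinely hard estimate appears; the content is matching the definition to the limit cleanly under the two regimes for $\alpha$.
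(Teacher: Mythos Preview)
Your proposal is correct and follows essentially the same $\varepsilon$-$\delta$ strategy as the paper: use $\rho(t)=t$ to collapse the defining inequality for (i), (ii), (iv), (v), and handle (vi) by evaluating at $s=t$ (the paper does a short case split instead, but to the same effect). The one place you differ slightly is (iii): rather than trying to shrink $\mathcal{U}$ to $\{t\}$ and then patch the right-dense case, the paper simply observes that continuity of $f$ and $t-\rho(t)>0$ give $\lim_{s\to t}\frac{f(s)-f^\rho(t)}{[s-\rho(t)]^\alpha}=\frac{f(t)-f^\rho(t)}{[t-\rho(t)]^\alpha}$, from which the defining inequality follows immediately---this is cleaner and avoids the awkward ``absorb into $\varepsilon$'' maneuver you sketch.
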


\begin{proof}
$(i)$ Assume that $f$ is fractional differentiable at $t$. Then, there
exists a neighborhood $\mathcal{U}$ of $t$ such that
\begin{equation*}
\left\vert \left[ f(s)-f^{\rho }(t)\right] -f^{\nabla ^{\alpha }}(t)\left[
s-\rho (t)\right] ^{\alpha }\right\vert \leq \varepsilon \left\vert s-\rho
(t)\right\vert ^{\alpha }
\end{equation*}
for $s\in \mathcal{U}$. Therefore, for all $s\in \mathcal{U}\cap \left]
t-\varepsilon ,t+\varepsilon \right[$,
\begin{equation*}
\begin{split}
\left\vert f\left( t\right) -f\left( s\right) \right\vert \leq & \left\vert
\left[ f(s)-f^{\rho }(t)\right] -f^{\nabla ^{\alpha }}(t)\left[ s-\rho (t)
\right]^{\alpha }\right\vert \\
&+\left\vert \left[ f(t)-f^{\rho }(t)\right]
-f^{\nabla^{\alpha }}(t)\left[ t-\rho (t)\right] ^{\alpha }\right\vert \\
&+\left\vert f^{\nabla ^{\alpha }}(t)\right\vert \left\vert \left[ s-\rho
(t)\right] ^{\alpha }-\left[ t-\rho (t)\right] ^{\alpha }\right\vert
\end{split}
\end{equation*}
and, since $t$ is a left-dense point,
\begin{equation*}
\begin{split}
\left\vert f\left( t\right) -f\left( s\right) \right\vert & \leq \left\vert
\left[ f(s)-f^{\rho }(t)\right] -f^{\nabla ^{\alpha }}(t)\left[ s-\rho (t)
\right]^{\alpha }\right\vert +\left\vert f^{\nabla ^{\alpha }}(t)\left[ s-t
\right]^{\alpha }\right\vert \\
& \leq \varepsilon \left\vert s-t\right\vert ^{\alpha }+\left\vert f^{\nabla
^{\alpha }}\left( t\right) \left[ s-t\right] ^{\alpha }\right\vert \\
& \leq \varepsilon ^{\alpha }\left[ \varepsilon +\left\vert f^{\nabla
^{\alpha }}(t)\right\vert \right] .
\end{split}
\end{equation*}
We conclude that $f$ is continuous at $t$.
$(ii)$ The proof is similar to the proof of $(i)$, where instead of
considering the neighborhood $\mathcal{U}$ of $t$ we consider a right
neighborhood $\mathcal{U}^{+}$ of $t$.
$(iii)$ Assume that $f$ is continuous at $t$ and $t$ is left-scattered. By
continuity,
\begin{equation*}
\lim_{s\rightarrow t}\frac{f(s)-f^{\rho }(t)}{\left[ s-\rho (t)\right]
^{\alpha }}=\frac{f(t)-f^{\rho }(t)}{\left[ t-\rho (t)\right] ^{\alpha }}.
\end{equation*}
Hence, given $\varepsilon >0$ and $\alpha \in ]0,1]\cap \left\{ 1/q:q\text{
is an odd number}\right\} $, there is a neighborhood $\mathcal{U}$ of $t$ (or
$\mathcal{U}^{+}$ if $\alpha \in ]0,1]\setminus \left\{ 1/q:q\text{ is an odd
number}\right\} $) such that
\begin{equation*}
\left\vert \frac{f(s)-f^{\rho }(t)}{\left[ s-\rho (t)\right] ^{\alpha }}
-\frac{f(t)-f^{\rho }(t)}{\left[ t-\rho (t)\right] ^{\alpha }}\right\vert
\leq \varepsilon
\end{equation*}
for all $s\in \mathcal{U}$ (resp. $\mathcal{U}^{+}$). It follows that
\begin{equation*}
\left\vert \left[ f(s)-f^{\rho }(t)\right] -\frac{f(t)-f^{\rho }(t)}{\left[
t-\rho (t)\right] ^{\alpha }}\left[ s-\rho (t)\right] ^{\alpha }\right\vert
\leq \varepsilon \left\vert s-\rho (t)\right\vert ^{\alpha }
\end{equation*}
for all $s\in \mathcal{U}$ (resp. $\mathcal{U}^{+}$). Hence, we get the
desired result:
\begin{equation*}
f^{\nabla ^{\alpha }}(t)=\frac{f(t)-f^{\rho }(t)}{\left[ t-\rho (t)\right]
^{\alpha }}.
\end{equation*}
$(iv)$ Assume that $f$ is nabla fractional differentiable of order $\alpha $
at $t$ and $t$ is left-dense. Let $\varepsilon >0$ be given. Since $f$ is
nabla fractional differentiable of order $\alpha $ at $t$, there is a
neighborhood $\mathcal{U}$ of $t$ such that
\begin{equation*}
\left\vert \left[ f(s)-f^{\rho }(t)\right] -f^{\nabla ^{\alpha }}(t)\left[
s-\rho (t)\right] ^{\alpha }\right\vert \leq \varepsilon \left\vert s-\rho
(t)\right\vert ^{\alpha }
\end{equation*}
for all $s\in \mathcal{U}$. Since $\rho (t)=t$,
\begin{equation*}
\left\vert \lbrack f(s)-f(t)]-f^{\nabla ^{\alpha }}(t)\left[ s-t\right]
^{\alpha }\right\vert \leq \varepsilon |s-t|^{\alpha }
\end{equation*}
for all $s\in \mathcal{U}$. It follows that
\begin{equation*}
\left\vert \frac{f(s)-f(t)}{\left[ s-t\right] ^{\alpha }}
-f^{\nabla ^{\alpha}}(t)\right\vert \leq \varepsilon
\end{equation*}
for all $s\in \mathcal{U}$, $s\neq t$. Therefore, we get the desired result:
\begin{equation*}
f^{\nabla ^{_{^{\alpha }}}}(t)=\lim_{s\rightarrow t}
\frac{f(t)-f(s)}{(t-s)^{\alpha }}.
\end{equation*}
Now assume that
\begin{equation*}
\lim_{s\rightarrow t}\frac{f(s)-f(t)}{\left( s-t\right) ^{\alpha }}
\end{equation*}
exists and is equal to $L$ and $t$ is left-dense. Then, there exists
a neighborhood $\mathcal{U}$ of $t$ such that
\begin{equation*}
\left\vert \frac{f(s)-f(t)}{\left( s-t\right) ^{\alpha }}-L\right\vert \leq
\varepsilon
\end{equation*}
for all $s\in \mathcal{U}\backslash\{t\}$. Because $t$ is left-dense,
\begin{equation*}
\left\vert \frac{f(s)-f^{\rho }(t)}{\left[ s-\rho (t)\right]^{\alpha}}
-L\right\vert \leq \varepsilon .
\end{equation*}
Therefore,
\begin{equation*}
\left\vert \left[ f(s)-f^{\rho }(t)\right]
-L\left[ s-\rho (t)\right]^{\alpha }\right\vert
\leq \varepsilon |s-\rho (t)|^{\alpha }
\end{equation*}
for all $s\in U$ (note that the inequality is trivially verified for $s=t$).
Hence, $f$ is nabla fractional differentiable
of order $\alpha $ at $t$ and
$$
f^{\nabla ^{\alpha }}(t)=\lim_{s\rightarrow t}\frac{f(s)-f(t)}{\left( s-t\right) ^{\alpha }}.
$$
$(v)$ The proof is similar to the proof of $(iv)$,
where instead of considering the neighborhood
$\mathcal{U}$ of $t$ we consider a right-neighborhood
$\mathcal{U}^{+}$ of $t$.
$(vi)$ If $\rho (t)=t$, then
\begin{equation*}
f^{\rho }(t)=f(t)=f(t)+\left[ t-\rho (t)\right]^{\alpha }f^{\nabla
^{_{^{\alpha }}}}\left( t\right).
\end{equation*}
On the other hand, if $t>\rho \left( t\right)$, then, by $(iii)$,
\begin{equation*}
f(t)=f^{\rho }(t)+\left[ t-\rho (t)\right] ^{\alpha }\frac{f(t)-f^{\rho }(t)}{
\left[ t-\rho (t)\right] ^{\alpha }}=f^{\rho }(t)
+\left[ t-\rho (t)\right]^{\alpha }f^{\nabla ^{_{^{\alpha }}}}(t).
\end{equation*}
The proof is complete.
\end{proof}

Next result relates different orders of
the nabla fractional derivative of a function.

\begin{theorem}
\label{T2}Let $\alpha ,\beta \in \left] 0,1\right] $ with $\beta \geq \alpha
$ and let $f:\mathbb{T}\rightarrow \mathbb{R}$ be a continuous function. If
$f$ is nabla fractional differentiable of order $\beta $ at $t\in \mathbb{T}$,
then $f$ is nabla fractional differentiable of order $\alpha $ at $t$.
\end{theorem}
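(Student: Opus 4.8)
The plan is to reduce the statement to the limit characterizations collected in Theorem~\ref{T1}, after first disposing of the degenerate case $\beta=\alpha$ (where there is nothing to prove). So I would assume $\beta>\alpha$ and split according to whether $t$ is left-scattered or left-dense.

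If $t$ is left-scattered, the hypothesis on the order $\beta$ is not even needed: since $f$ is continuous at $t$, part~(iii) of Theorem~\ref{T1}, applied with the order $\alpha$, gives at once that $f$ is nabla fractional differentiable of order $\alpha$ at $t$, with $f^{\nabla^{\alpha}}(t)=\bigl(f(t)-f^{\rho}(t)\bigr)\big/[t-\rho(t)]^{\alpha}$.

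If $t$ is left-dense, I would invoke parts~(iv)--(v) of Theorem~\ref{T1}: nabla fractional differentiability of order $\beta$ at $t$ is equivalent to the existence of a finite limit $\ell:=\lim_{s\to t}\bigl(f(s)-f(t)\bigr)/(s-t)^{\beta}$ (two-sided if $\beta\in\{1/q:q\text{ odd}\}$, one-sided otherwise). For $s\neq t$ near $t$ I would then write
\[
\frac{f(s)-f(t)}{(s-t)^{\alpha}}
=\frac{f(s)-f(t)}{(s-t)^{\beta}}\cdot\frac{(s-t)^{\beta}}{(s-t)^{\alpha}},
\]
observe that $(s-t)^{\beta}/(s-t)^{\alpha}=|s-t|^{\beta-\alpha}\to 0$ as $s\to t$ because $\beta-\alpha>0$ (the signs cancel in the odd-root cases), and conclude that the left-hand side tends to $\ell\cdot 0=0$. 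By parts~(iv)--(v) of Theorem~\ref{T1} applied with the order $\alpha$, this shows $f$ is nabla fractional differentiable of order $\alpha$ at $t$, with $f^{\nabla^{\alpha}}(t)=0$. An equivalent route, avoiding the case split on $t$, is to argue straight from the definition: from $f(s)-f^{\rho}(t)=f^{\nabla^{\beta}}(t)[s-\rho(t)]^{\beta}+R(s)$ with $|R(s)|\le\varepsilon|s-\rho(t)|^{\beta}$ on some $\delta$-neighborhood, dividing by $[s-\rho(t)]^{\alpha}$ gives $\bigl|\,(f(s)-f^{\rho}(t))/[s-\rho(t)]^{\alpha}\,\bigr|\le\bigl(\varepsilon+|f^{\nabla^{\beta}}(t)|\bigr)\delta^{\beta-\alpha}$, which is smaller than any prescribed tolerance once $\delta$ is small enough, so $f^{\nabla^{\alpha}}(t)=0$ satisfies the defining inequality for order $\alpha$.

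The step I expect to be the main obstacle is the bookkeeping of neighborhoods: the order-$\beta$ defining inequality is guaranteed on a full $\delta$-neighborhood only when $\beta\in\{1/q:q\text{ odd}\}$, and on a right $\delta$-neighborhood otherwise, whereas the order-$\alpha$ conclusion may require a full neighborhood. When $\alpha$ and $\beta$ lie on the same side of this dichotomy---or whenever $\beta\in\{1/q:q\text{ odd}\}$---the arguments above go through verbatim. The remaining configuration, namely $\alpha\in\{1/q:q\text{ odd}\}$ while $\beta\notin\{1/q:q\text{ odd}\}$, also constrains $f$ to the left of $t$, where the order-$\beta$ hypothesis supplies no information, so this case has to be isolated and handled with care. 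Accordingly I would present the proof as a trichotomy: $\beta=\alpha$ (trivial); $t$ left-scattered (Theorem~\ref{T1}(iii)); and $t$ left-dense (the limit, or $\varepsilon$--$\delta$, computation above).
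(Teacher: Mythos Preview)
Your approach is essentially the same as the paper's: split into the left-scattered case (handled by Theorem~\ref{T1}(iii)) and the left-dense case (handled by factoring $(s-t)^{-\alpha}=(s-t)^{-\beta}\cdot(s-t)^{\beta-\alpha}$ and using Theorem~\ref{T1}(iv)--(v)). The paper's proof is terser and does not separate out $\beta=\alpha$; you are in fact more careful than the paper about the one-sided versus two-sided neighborhood dichotomy, which the paper's argument passes over without comment.
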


\begin{proof}
If $t$ is left-scattered, then, by Theorem \ref{T1} $\left( iii\right) ,$ $f$
is nabla fractional differentiable of any order $\alpha \in \left] 0,1\right]$.
If $t$ is left-dense, then, by Theorem \ref{T1} $(iv,v)$,
\begin{equation*}
f^{\nabla ^{\beta }}(t)=\lim_{s\rightarrow t}\frac{f(s)-f(t)}{(s-t)^{\beta }}.
\end{equation*}
Since
\begin{equation*}
f^{\nabla ^{\beta }}(t)=\lim_{s\rightarrow t}
\frac{\frac{f(s)-f(t)}{(s-t)^{\alpha }}}{(s-t)^{\beta -\alpha }},
\end{equation*}
we have
\begin{equation*}
f^{\nabla ^{\alpha }}(t)=\lim_{s\rightarrow t}(s-t)^{\beta -\alpha}
f^{\nabla ^{\beta }}(t),
\end{equation*}
which proves existence of the nabla fractional derivative of $f$ of
order $\alpha$ at $t\in \mathbb{T}$.
\end{proof}

\begin{proposition}
\label{E1:i} If $f:\mathbb{T}\rightarrow \mathbb{R}$ is defined by $f(t)=c$
for all $t\in \mathbb{T}$, $c\in \mathbb{R}$, then $f^{\nabla ^{\alpha
}}\equiv 0$.
\end{proposition}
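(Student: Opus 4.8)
The plan is to verify the defining inequality of the nabla fractional derivative directly with the candidate value $f^{\nabla^\alpha}(t) = 0$. Since $f$ is constant, we have $f(s) = c$ for every $s \in \mathbb{T}$, and in particular $f^\rho(t) = c$ as well. Hence for any $s$ in a $\delta$-neighborhood of $t$ (or right $\delta$-neighborhood, depending on whether $\alpha$ is the reciprocal of an odd number),
\begin{equation*}
\left\vert \left[ f(s) - f^\rho(t) \right] - 0 \cdot \left[ s - \rho(t) \right]^\alpha \right\vert
= \left\vert c - c \right\vert = 0 \leq \varepsilon \left\vert s - \rho(t) \right\vert^\alpha
\end{equation*}
for every $\varepsilon > 0$. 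Thus the property in the definition of the nabla fractional derivative holds trivially with $f^{\nabla^\alpha}(t) = 0$, and this works for every $t \in \mathbb{T}_k$ and every $\alpha \in \left]0,1\right]$.

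Alternatively, and perhaps more cleanly, I would split on the nature of the point $t$ and invoke Theorem~\ref{T1}. If $t$ is left-scattered, then a constant function is certainly continuous at $t$, so Theorem~\ref{T1}~$(iii)$ applies and gives
\begin{equation*}
f^{\nabla^\alpha}(t) = \frac{f(t) - f^\rho(t)}{\left[ t - \rho(t) \right]^\alpha} = \frac{c - c}{\left[ t - \rho(t) \right]^\alpha} = 0.
\end{equation*}
If $t$ is left-dense, then by Theorem~\ref{T1}~$(iv)$ or $(v)$ (according to whether $\alpha$ is of the form $1/q$ with $q$ odd) it suffices to compute the relevant limit: $\frac{f(s) - f(t)}{(s-t)^\alpha} = \frac{c - c}{(s-t)^\alpha} = 0$ for all $s \neq t$, so the limit exists and equals $0$, giving $f^{\nabla^\alpha}(t) = 0$.

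There is essentially no obstacle here; the statement is a baseline sanity check confirming that the new fractional operator annihilates constants, exactly as one expects of any reasonable differentiation operator. The only minor point to be careful about is that the two cases in the definition ($\alpha = 1/q$ with $q$ odd versus otherwise) use different neighborhoods, but since the estimate is the identically-zero bound $0 \leq \varepsilon|s-\rho(t)|^\alpha$, it holds on any neighborhood whatsoever, so the distinction is immaterial. I would present the direct verification as the main argument, since it is self-contained and does not even require the preceding theorem.
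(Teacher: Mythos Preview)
Your proposal is correct. Your \emph{alternative} argument---splitting on whether $t$ is left-scattered or left-dense and invoking Theorem~\ref{T1}~$(iii)$ and $(iv)$/$(v)$---is exactly the paper's proof. Your \emph{primary} argument, verifying the defining inequality directly with the candidate $0$, is a slightly more elementary route: it bypasses Theorem~\ref{T1} entirely and handles both the left-scattered and left-dense cases, as well as both ranges of $\alpha$, in a single line. The paper's approach has the minor advantage of illustrating how Theorem~\ref{T1} is used in practice, while yours is self-contained and avoids any case distinction.
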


\begin{proof}
If $t$ is left-scattered, then, by Theorem~\ref{T1} $(iii)$, one has
\begin{equation*}
f^{\nabla ^{\alpha }}(t)=\frac{f(t)-f^{\rho }(t)}{\left[ t-\rho (t)\right]
^{\alpha }}=\frac{c-c}{\left[ t-\rho (t)\right] ^{\alpha }}=0.
\end{equation*}
Assume $t$ is left-dense. Then, by Theorem~\ref{T1} $(iv)$ and $(v)$, it follows
that
\begin{equation*}
f^{\nabla ^{\alpha }}(t)=\lim_{s\rightarrow t}\frac{c-c}{\left[
t-\rho(t)\right] ^{\alpha }}=0.
\end{equation*}
This concludes the proof.
\end{proof}

\begin{proposition}
\label{E1:ii} If $f:\mathbb{T}\rightarrow \mathbb{R}$ is defined by $f(t)=t$
for all $t\in \mathbb{T}$, then
\begin{equation*}
f^{\nabla ^{\alpha }}(t)
=
\begin{cases}
\left[ t-\rho (t)\right] ^{1-\alpha } & \text{ if }\alpha \neq 1, \\
1 & \text{ if }\alpha =1.
\end{cases}
\end{equation*}
\end{proposition}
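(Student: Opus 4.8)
The plan is to split into the two standard cases according to whether $t$ is left-scattered or left-dense, exactly as in the proofs of Propositions~\ref{E1:i} and~\ref{E1:ii}'s predecessors, since Theorem~\ref{T1} gives us a complete characterization of the nabla fractional derivative in each case. First I would treat the left-scattered case. By Theorem~\ref{T1}~$(iii)$, since $f(t)=t$ is certainly continuous, $f$ is nabla fractional differentiable of order $\alpha$ at $t$ with
\begin{equation*}
f^{\nabla^{\alpha}}(t)=\frac{f(t)-f^{\rho}(t)}{[t-\rho(t)]^{\alpha}}=\frac{t-\rho(t)}{[t-\rho(t)]^{\alpha}}=[t-\rho(t)]^{1-\alpha},
\end{equation*}
which is valid for every $\alpha\in\,]0,1]$; note that when $\alpha=1$ this expression equals $[t-\rho(t)]^{0}=1$, so the two cases of the claimed formula agree here (and when $t$ is left-scattered, $t-\rho(t)>0$, so there is no ambiguity in the power).

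Next I would handle the left-dense case, where $\rho(t)=t$. Here I invoke Theorem~\ref{T1}~$(iv)$ (when $\alpha=1/q$ with $q$ odd) or $(v)$ (otherwise): $f$ is nabla fractional differentiable of order $\alpha$ at $t$ if and only if the (two-sided, resp.\ one-sided) limit $\lim_{s\to t}\frac{f(s)-f(t)}{(s-t)^{\alpha}}=\lim_{s\to t}\frac{s-t}{(s-t)^{\alpha}}=\lim_{s\to t}(s-t)^{1-\alpha}$ exists finitely. For $\alpha<1$ we have $1-\alpha>0$, so this limit is $0$; since $t-\rho(t)=0$, the value $[t-\rho(t)]^{1-\alpha}=0^{1-\alpha}=0$ matches. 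For $\alpha=1$ the limit is $\lim_{s\to t}(s-t)^{0}=1$, matching the second case of the formula. Thus in every left-dense case the derivative exists and equals the stated value.

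The only real subtlety — and the step I would be most careful about — is the consistency of the formula at left-dense points with $\alpha<1$, where one is tempted to write $[t-\rho(t)]^{1-\alpha}=0^{1-\alpha}=0$ but must note that this is exactly the limiting value $\lim_{s\to t}(s-t)^{1-\alpha}$ delivered by Theorem~\ref{T1}~$(iv)$/$(v)$, so the notation $[t-\rho(t)]^{1-\alpha}$ should be read as that limit rather than a naive substitution; alternatively one can simply say that at left-dense points $f^{\nabla^{\alpha}}(t)=0$ for $\alpha\neq1$ and $f^{\nabla^{\alpha}}(t)=1$ for $\alpha=1$, which agrees with $[t-\rho(t)]^{1-\alpha}$ under the convention that $[t-\rho(t)]^{1-\alpha}$ denotes the above limit. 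Combining the left-scattered and left-dense cases then gives the claimed piecewise formula for all $t\in\mathbb{T}_{k}$, completing the proof.
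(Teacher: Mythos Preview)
Your proof is correct and follows essentially the same approach as the paper: split into left-scattered and left-dense cases and apply the relevant parts of Theorem~\ref{T1}. The only cosmetic difference is that the paper first invokes Theorem~\ref{T2} to assert differentiability for all $\alpha$ and then uses Theorem~\ref{T1}~$(vi)$ in the left-scattered case, whereas you go straight to Theorem~\ref{T1}~$(iii)$; the left-dense computation via $(iv)$/$(v)$ is identical.
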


\begin{proof}
Clearly, the function is nabla differentiable, which is the same as saying
that function $f$ is nabla fractional differentiable of order 1. Then, by
Theorem~\ref{T2}, the function is nabla fractional differentiable of order
$\alpha$, with $\alpha \in \left] 0,1\right] $. From Theorem~\ref{T1} $(vi)$
it follows that
\begin{equation*}
t-\rho (t)=\left[ t-\rho (t)\right]^{\alpha }f^{\nabla ^{_{^{\alpha }}}}(t).
\end{equation*}
If $t-\rho (t)\neq 0$, then
$$
f^{\nabla ^{_{^{\alpha }}}}(t)=\left[ t-\rho (t)\right] ^{1-\alpha }$$
and the desired relation is proved. Assume now that $t-\rho (t)=0$, that is,
$\rho (t)=t$. In this case $t$ is left-dense and by Theorem~\ref{T1} $(iv)$ and
$(v)$ it follows that
\begin{equation*}
f^{\nabla ^{\alpha }}(t)=\lim_{s\rightarrow t}\frac{s-t}{(s-t)^{\alpha }}.
\end{equation*}
Therefore, if $\alpha =1$, then $f^{\nabla ^{\alpha }}(t)=1$; if $0<\alpha
<1 $, then $f^{\nabla ^{\alpha }}(t)=0$.
\end{proof}

Let us now consider the particular case $\mathbb{T}=\mathbb{R}$.

\begin{corollary}
Function $f:\mathbb{R}\rightarrow \mathbb{R}$ is nabla fractional
differentiable of order $\alpha $ at point $t\in \mathbb{R}$ if, and only
if, the limit
\begin{equation*}
\lim_{s\rightarrow t}\frac{f(s)-f(t)}{(s-t)^{\alpha }}
\end{equation*}
exists as a finite number. In this case,
\begin{equation*}
f^{\nabla ^{\alpha }}(t)=\lim_{s\rightarrow t}\frac{f(s)-f(t)}{(s-t)^{\alpha
}}.
\end{equation*}
\end{corollary}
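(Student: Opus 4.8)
The plan is to derive this as an immediate specialization of Theorem~\ref{T1}. The key observation is that when $\mathbb{T}=\mathbb{R}$, every point $t$ is both left-dense and right-dense, so $\rho(t)=\sigma(t)=t$ and $\mathbb{T}_k=\mathbb{R}$; thus the case analysis in the definition of the nabla fractional derivative collapses. First I would note that for $\alpha\in]0,1]\cap\{1/q: q\text{ odd}\}$, part $(iv)$ of Theorem~\ref{T1} applies directly at every $t\in\mathbb{R}$ (since $t$ is left-dense), giving exactly the stated two-sided limit characterization together with the formula $f^{\nabla^\alpha}(t)=\lim_{s\to t}\frac{f(s)-f(t)}{(s-t)^\alpha}$.

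The only genuine content beyond quoting part $(iv)$ is to check that, over $\mathbb{R}$, the two-sided limit statement also holds for the complementary range $\alpha\in]0,1]\setminus\{1/q: q\text{ odd}\}$, where a priori Theorem~\ref{T1}$(v)$ only gives a one-sided (right) limit. So the second step would be: fix such an $\alpha$ and a point $t\in\mathbb{R}$. By part $(v)$, $f$ is nabla fractional differentiable of order $\alpha$ at $t$ if and only if $\lim_{s\to t^+}\frac{f(s)-f(t)}{(s-t)^\alpha}$ exists finitely. I then need to argue that, on $\mathbb{R}$, existence of this one-sided limit is equivalent to existence of the full two-sided limit $\lim_{s\to t}\frac{f(s)-f(t)}{(s-t)^\alpha}$ with the same value. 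For $s<t$ the quantity $(s-t)^\alpha$ is the power of a negative real and, for $\alpha$ not of the form $1/q$ with $q$ odd, this is not a real number in general; the resolution is that the definition's neighborhood $\mathcal{U}$ is taken to be a \emph{right} $\delta$-neighborhood precisely in this case, so only $s\ge t$ is ever tested, and the ``two-sided'' limit in the corollary's statement is to be read in that same convention. Hence the one-sided limit of part $(v)$ \emph{is} the limit appearing in the corollary, and the equivalence is tautological once the convention is made explicit.

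The cleanest way to present this is therefore to split into the two cases for $\alpha$ exactly as in the definition: in the first case invoke Theorem~\ref{T1}$(iv)$ verbatim; in the second case invoke Theorem~\ref{T1}$(v)$ and observe that, since every real point is left-dense, the right-hand limit there is the only meaningful interpretation of the displayed limit and no further work is needed. I would also remark that Proposition~\ref{E1:ii} specializes consistently: for $f(t)=t$ on $\mathbb{R}$ one recovers $f^{\nabla^\alpha}(t)=0$ for $\alpha<1$ and $=1$ for $\alpha=1$, matching $\lim_{s\to t}\frac{s-t}{(s-t)^\alpha}$.

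I do not anticipate a serious obstacle here; the ``hard part'' is purely expository, namely being careful about what $(s-t)^\alpha$ means for $s<t$ when $\alpha\notin\{1/q: q\text{ odd}\}$, and making sure the reader understands that in that regime the limit in the corollary is the right-sided one. Everything else is a direct transcription of Theorem~\ref{T1}.
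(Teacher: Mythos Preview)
Your proposal is correct and follows essentially the same route as the paper's own proof: observe that in $\mathbb{T}=\mathbb{R}$ every point is left-dense, invoke Theorem~\ref{T1}$(iv)$ and $(v)$, and note that for $\alpha\in]0,1]\setminus\{1/q:q\text{ odd}\}$ the displayed limit is to be read as a right-sided limit. The consistency check with Proposition~\ref{E1:ii} is a nice touch but not part of the paper's argument.
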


\begin{proof}
Here $\mathbb{T}=\mathbb{R}$ and all points are left-dense. The result
follows from Theorem~\ref{T1} $(iv)$ and $(v)$. Note that if $\alpha \in
]0,1]\setminus \left\{ \frac{1}{q}:q\text{ is an odd number}\right\} $, then
the limit only makes sense as a right-side limit.
\end{proof}

The next result shows that there are functions which are nabla fractional
differentiable but are not nabla differentiable.

\begin{proposition}
\label{prop:ex}
If $f:\mathbb{R}_{0}^{+}\rightarrow \mathbb{R}$
is defined by $f(t)=\sqrt{t}$ for all $t\in \mathbb{R}_{0}^{+}$, then
\begin{equation*}
f^{\nabla^{1/2}}(t)
=
\begin{cases}
0 & \text{ if }t\neq 0, \\
1 & \text{ if }t=0.
\end{cases}
\end{equation*}
\end{proposition}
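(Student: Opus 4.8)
The plan is to invoke Theorem~\ref{T1}$(v)$. First I would observe that here $\mathbb{T}=\mathbb{R}_{0}^{+}$, so every point $t\in\mathbb{T}$ is left-dense (for $t>0$ one has $\rho(t)=t$, and $t=0$ is the minimum of $\mathbb{T}$, hence $\rho(0)=\sup\emptyset=\inf\mathbb{T}=0$). Next, since $\tfrac12=\tfrac1q$ with $q=2$ an \emph{even} number, we have $\alpha=\tfrac12\in\,]0,1]\setminus\{1/q:q\text{ is an odd number}\}$, so the relevant characterization is the one-sided one of Theorem~\ref{T1}$(v)$: $f$ is nabla fractional differentiable of order $\tfrac12$ at $t$ if, and only if, the limit $\lim_{s\to t^{+}}\frac{f(s)-f(t)}{(s-t)^{1/2}}$ exists finitely, in which case $f^{\nabla^{1/2}}(t)$ equals this limit.

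It then remains to compute this right-hand limit in the two cases. For $t>0$, I would rationalize the numerator, writing $\sqrt{s}-\sqrt{t}=\dfrac{s-t}{\sqrt{s}+\sqrt{t}}$, so that
\begin{equation*}
\frac{\sqrt{s}-\sqrt{t}}{(s-t)^{1/2}}=\frac{(s-t)^{1/2}}{\sqrt{s}+\sqrt{t}}
\xrightarrow[s\to t^{+}]{}\frac{0}{2\sqrt{t}}=0 ,
\end{equation*}
which gives $f^{\nabla^{1/2}}(t)=0$ by Theorem~\ref{T1}$(v)$. For $t=0$ the quotient is simply $\dfrac{\sqrt{s}-0}{(s-0)^{1/2}}=\dfrac{\sqrt{s}}{\sqrt{s}}=1$ for every $s>0$, so the limit is $1$ and $f^{\nabla^{1/2}}(0)=1$.

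There is essentially no obstacle here: the only point requiring a moment's care is recognizing that $1/2$ falls into the \emph{even} denominator regime, so one must use the right-neighborhood (one-sided) version of the derivative and of Theorem~\ref{T1}, rather than its two-sided analogue; everything else is an elementary limit computation. One may also remark, as a sanity check, that $f(t)=\sqrt t$ fails to be nabla differentiable at $t=0$ in the ordinary ($\alpha=1$) sense, so this proposition indeed furnishes the promised example of a function that is nabla fractional differentiable without being nabla differentiable.
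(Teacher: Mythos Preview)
Your proposal is correct and follows essentially the same argument as the paper: both invoke Theorem~\ref{T1}$(v)$ on the left-dense time scale $\mathbb{T}=\mathbb{R}_0^+$, rationalize the numerator to obtain $\dfrac{\sqrt{s-t}}{\sqrt{s}+\sqrt{t}}\to 0$ for $t>0$, and evaluate the quotient directly as $1$ for $t=0$. Your additional remarks about why $\alpha=\tfrac12$ falls into the one-sided regime and about $\rho(0)=0$ are correct elaborations that the paper leaves implicit.
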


\begin{proof}
Here $\mathbb{T} = \mathbb{R}_{0}^{+}$.
In this time scale every point $t$
is left-dense and by Theorem~\ref{T1} $(v)$
with $\alpha = 1/2$ it follows that
\begin{equation*}
f^{\nabla^{1/2}}(t)
=\lim_{s\rightarrow t^{+}}\frac{\sqrt{s}
-\sqrt{t}}{\sqrt{s-t}}
=\lim_{s\rightarrow t^{+}}\frac{\sqrt{s-t}}{\sqrt{s}+\sqrt{t}}
=0
\end{equation*}
for $t\neq 0$. If $t=0$, then
\begin{equation*}
f^{\nabla ^{1/2}}(t)
=\lim_{s\rightarrow 0^{+}}\frac{\sqrt{s}}{\sqrt{s}}=1.
\end{equation*}
This concludes the proof.
\end{proof}

\begin{remark}
If $f: \mathbb{R} \rightarrow \mathbb{R}$ is differentiable, then
\[
f^{\nabla ^{\alpha }}\left( t\right) =\lim_{s\rightarrow t}\frac{f\left(
s\right)-f\left( t\right)}{\left(s-t\right)^{\alpha }}
=\lim_{s\rightarrow t}\frac{f\left( s\right) -f\left( t\right) }{s-t}\left(
s-t\right) ^{1-\alpha }=f^{\prime }\left( t\right) \lim_{s\rightarrow
t}\left( s-t\right) ^{1-\alpha }=0.
\]
Therefore, the nabla derivative of order $\alpha $ is
always zero whenever the function is differentiable.
The fractional nabla derivative is particular useful to study the behaviour
of functions that are not differentiable in the classical sense, as shown in Proposition~\ref{prop:ex}.
\end{remark}

For the fractional derivative on time scales to be useful, we would like to
know formulas for the derivatives of sums, products and quotients of
fractional differentiable functions. This is done according to the following
theorem.

\begin{theorem}
\label{T3} Assume $f,g:\mathbb{T}\rightarrow \mathbb{R}$ are nabla
fractional differentiable of order $\alpha $ at $t\in \mathbb{T}_{k}$. Then,

\begin{enumerate}
\item[(i)] the sum $f+g:\mathbb{T}\rightarrow \mathbb{R}$ is nabla
fractional differentiable at $t$ with $$(f+g)^{\nabla ^{\alpha
}}(t)=f^{\nabla ^{\alpha }}(t)+g^{\nabla ^{\alpha }}(t);$$

\item[(ii)] for any constant $\lambda \in
\mathbb{R}
$, $\lambda f:\mathbb{T}\rightarrow \mathbb{R}$ is nabla fractional
differentiable at $t$ with $$(\lambda f)^{\nabla ^{\alpha }}(t)=\lambda
f^{\nabla ^{\alpha }}(t);$$

\item[(iii)] if $f$ and $g$ are continuous, then the product $fg:\mathbb{T}
\rightarrow \mathbb{R}$ is nabla fractional differentiable at $t$ with
\begin{equation*}
\begin{split}
(fg)^{\nabla ^{\alpha }}(t)& =f^{\nabla ^{\alpha }}\left( t\right) g\left(
t\right) +f^{\rho }\left( t\right) g^{\nabla ^{\alpha }}\left( t\right) \\
& =f^{\nabla ^{\alpha }}(t)g^{\rho }(t)+f(t)g^{\nabla ^{\alpha }}(t);
\end{split}
\end{equation*}

\item[(iv)] if $f$ is continuous and $f^{\rho }(t)f(t)\neq 0$,
then $\frac{1}{f}$ is nabla fractional differentiable at $t$ with
\begin{equation*}
\left( \frac{1}{f}\right) ^{\nabla ^{\alpha }}(t)=-\frac{f^{\nabla ^{\alpha
}}(t)}{f^{\rho }(t)f(t)};
\end{equation*}

\item[(v)] if $f$ and $g$ are continuous and $g^{\rho }(t)g(t)\neq 0$, then
$\frac{f}{g}$ is fractional differentiable at $t$ with
\begin{equation*}
\left( \frac{f}{g}\right) ^{\nabla ^{\alpha }}(t)=\frac{f^{\nabla ^{\alpha
}}(t)g(t)-f(t)g^{\nabla ^{\alpha }}(t)}{g^{\rho }(t)g(t)}.
\end{equation*}
\end{enumerate}
\end{theorem}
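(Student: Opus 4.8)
The plan is to dispatch the two additive parts straight from the definition and to reduce the three multiplicative parts to Theorem~\ref{T1}. For (i) and (ii) I would argue as in the classical time-scale calculus: given $\varepsilon>0$, choose for $f$ and for $g$ a $\delta$-neighbourhood of $t$ of the appropriate type — a full $\delta$-neighbourhood when $\alpha\in\{1/q:q\text{ is an odd number}\}$ and a right $\delta$-neighbourhood otherwise — on which the respective defining inequalities hold, intersect them (the intersection is again a neighbourhood of the same type after shrinking $\delta$ to the smaller radius), and apply the triangle inequality. For the sum this produces $\left|[(f+g)(s)-(f+g)^{\rho}(t)]-(f^{\nabla^{\alpha}}(t)+g^{\nabla^{\alpha}}(t))[s-\rho(t)]^{\alpha}\right|\le 2\varepsilon|s-\rho(t)|^{\alpha}$, and replacing $\varepsilon$ by $\varepsilon/2$ finishes it; part (ii) is the same computation for the single function $\lambda f$ with the factor $|\lambda|$ absorbed into $\varepsilon$.

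For (iii)--(v) I would split according to whether $t$ is left-scattered or left-dense. If $t$ is left-scattered, then $fg$, $1/f$ (which is defined and continuous at $t$, since $f$ is continuous and $f(t)\neq0$) and $f/g$ are continuous at $t$, so Theorem~\ref{T1}(iii) simultaneously guarantees their nabla fractional differentiability of order $\alpha$ and gives the closed expression $h^{\nabla^{\alpha}}(t)=\frac{h(t)-h^{\rho}(t)}{[t-\rho(t)]^{\alpha}}$; the three identities then follow by elementary algebra, e.g.\ for the product one expands $f(t)g(t)-f^{\rho}(t)g^{\rho}(t)=[f(t)-f^{\rho}(t)]g(t)+f^{\rho}(t)[g(t)-g^{\rho}(t)]$ and divides by $[t-\rho(t)]^{\alpha}$. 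If $t$ is left-dense, then $f^{\rho}(t)=f(t)$ and $g^{\rho}(t)=g(t)$, and by Theorem~\ref{T1}(iv)--(v) the fractional derivatives are the limits $\lim_{s\to t}\frac{f(s)-f(t)}{(s-t)^{\alpha}}$, etc. (two-sided when $\alpha\in\{1/q:q\text{ is an odd number}\}$, and $s\to t^{+}$ otherwise). For the product one writes $\frac{(fg)(s)-(fg)(t)}{(s-t)^{\alpha}}=\frac{f(s)-f(t)}{(s-t)^{\alpha}}\,g(s)+f(t)\,\frac{g(s)-g(t)}{(s-t)^{\alpha}}$ and lets $s\to t$: continuity of $g$ at $t$ gives $g(s)\to g(t)$, the two difference quotients converge by hypothesis, so the limit exists and equals $f^{\nabla^{\alpha}}(t)g(t)+f(t)g^{\nabla^{\alpha}}(t)$, which is each of the two displayed forms since $f^{\rho}(t)=f(t)$ and $g^{\rho}(t)=g(t)$. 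For (iv) one uses $\frac{1/f(s)-1/f(t)}{(s-t)^{\alpha}}=-\frac{1}{f(s)f(t)}\cdot\frac{f(s)-f(t)}{(s-t)^{\alpha}}$ together with $f(s)\to f(t)\neq0$. The equality of the two forms of the product rule (genuinely needed only when $t$ is left-scattered) comes from Theorem~\ref{T1}(vi): the difference of the two right-hand sides is $f^{\nabla^{\alpha}}(t)(g(t)-g^{\rho}(t))-g^{\nabla^{\alpha}}(t)(f(t)-f^{\rho}(t))$, which vanishes because $h(t)-h^{\rho}(t)=[t-\rho(t)]^{\alpha}h^{\nabla^{\alpha}}(t)$.

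Part (v) I would then deduce from (iii) and (iv) rather than repeat the case split: by (iv) applied to $g$, the function $1/g$ is nabla fractional differentiable at $t$ with $(1/g)^{\nabla^{\alpha}}(t)=-g^{\nabla^{\alpha}}(t)/(g^{\rho}(t)g(t))$ and it is continuous at $t$, so the second form of the product rule in (iii) applied to $f\cdot(1/g)$ gives $\left(\frac{f}{g}\right)^{\nabla^{\alpha}}(t)=f^{\nabla^{\alpha}}(t)\frac{1}{g^{\rho}(t)}+f(t)\left(-\frac{g^{\nabla^{\alpha}}(t)}{g^{\rho}(t)g(t)}\right)=\frac{f^{\nabla^{\alpha}}(t)g(t)-f(t)g^{\nabla^{\alpha}}(t)}{g^{\rho}(t)g(t)}$, as claimed.

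The routine content is (i), (ii) and the left-scattered case; the real work is the left-dense case of (iii) and (iv). The main obstacle there is to justify that the difference quotient of $fg$ (respectively of $1/f$) genuinely splits into a sum (respectively a product) of separately convergent pieces — and this is precisely where the continuity hypotheses are indispensable: continuity of $g$ forces $g(s)\to g(t)$, while continuity of $f$ with $f^{\rho}(t)f(t)\neq0$ keeps $f(s)$ bounded away from $0$ on a neighbourhood, so that $1/f$ is defined there and $1/(f(s)f(t))\to 1/f(t)^{2}$ — together with keeping track of whether the limits are two-sided or one-sided, which is dictated by whether $\alpha\in\{1/q:q\text{ is an odd number}\}$. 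A self-contained alternative for (iii) and (iv), avoiding limits, is the direct estimate: write $(fg)(s)-(fg)^{\rho}(t)-[f^{\nabla^{\alpha}}(t)g(t)+f^{\rho}(t)g^{\nabla^{\alpha}}(t)][s-\rho(t)]^{\alpha}$ as the sum of $\{[f(s)-f^{\rho}(t)]-f^{\nabla^{\alpha}}(t)[s-\rho(t)]^{\alpha}\}g(s)$, $f^{\nabla^{\alpha}}(t)[s-\rho(t)]^{\alpha}(g(s)-g(t))$ and $f^{\rho}(t)\{[g(s)-g^{\rho}(t)]-g^{\nabla^{\alpha}}(t)[s-\rho(t)]^{\alpha}\}$, bound each on a small common neighbourhood using the defining inequalities, the continuity of $g$, and the crude bound $|g(s)|\le|g(t)|+\varepsilon$, and then rescale $\varepsilon$; the reciprocal rule is handled the same way with $1/(f(s)f^{\rho}(t))$ playing the role of $g(s)$.
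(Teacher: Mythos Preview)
Your proof is correct and follows essentially the same plan as the paper: parts (i) and (ii) directly from the definition via the triangle inequality, part (iii) via the left-scattered/left-dense dichotomy using Theorem~\ref{T1}(iii)--(v), and part (v) by combining (iii) and (iv). The one genuine difference is in (iv): the paper takes the shortcut of differentiating the identity $f\cdot(1/f)=1$ with the already-proved product rule and solving for $(1/f)^{\nabla^{\alpha}}(t)$, whereas you establish (iv) directly by the same case split used for (iii). Your route is slightly longer but has the advantage of actually \emph{proving} that $1/f$ is nabla fractional differentiable at $t$; the paper's argument tacitly presupposes this, since (iii) applies only once both factors are known to be differentiable. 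Your closing ``direct estimate'' alternative is a third valid option that the paper does not pursue.
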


\begin{proof}
Let us consider that $\alpha \in ]0,1]\cap \left\{ \frac{1}{q}:q\text{ is an
odd number}\right\} $. The proofs for the case $\alpha \in ]0,1]\setminus
\left\{ \frac{1}{q}:q\text{ is an odd number}\right\} $ are similar: one just
needs to choose the proper right-sided neighborhoods. Assume that $f$ and $g$
are nabla fractional differentiable of order $\alpha$ at $t\in \mathbb{T}_{k}$.
$(i)$ Let $\varepsilon >0$. Then there exist neighborhoods
$\mathcal{U}_{1}$ and $\mathcal{U}_{2}$ of $t$ for which
\begin{equation*}
\left\vert \left[ f(s)-f^{\rho }(t)\right] -f^{\nabla ^{\alpha }}(t)\left[
s-\rho (t)\right] ^{\alpha }\right\vert \leq \frac{\varepsilon }{2}
\left\vert s-\rho (t)\right\vert ^{\alpha }~~for~all~~s\in \mathcal{U}_{1}
\end{equation*}
and
\begin{equation*}
\left\vert \left[ g(s)-g^{\rho }(t)\right] -g^{\nabla ^{\alpha }}(t)\left[
s-\rho (t)\right] ^{\alpha }\right\vert \leq \frac{\varepsilon }{2}
\left\vert s-\rho (t)\right\vert ^{\alpha }~~for~all~~s\in \mathcal{U}_{2}.
\end{equation*}
Let $\mathcal{U}=\mathcal{U}_{1}\cap \mathcal{U}_{2}$. Then
\begin{equation*}
\begin{split}
\biggl|(&f+g)(s) -(f +g)^{\rho }(t)-\left[
f^{\nabla ^{\alpha }}(t)+g^{\nabla ^{\alpha }}(t)\right] \left[ s-\rho (t)
\right] ^{\alpha }\biggr| \\
& \leq \left\vert \left[ f(s)-f^{\rho }(t)\right] -f^{\nabla ^{\alpha }}(t)
\left[ s-\rho (t)\right] ^{\alpha }\right\vert +\left\vert \left[
g(s)-g^{\rho }(t)\right] -g^{\nabla ^{\alpha }}(t)\left[ s-\rho (t)\right]
^{\alpha }\right\vert \\
& \leq \varepsilon |s-\rho \left( t\right)|^{\alpha}
\end{split}
\end{equation*}
for all $s\in \mathcal{U}$. Therefore, $f+g$ is fractional differentiable
of order $\alpha$ at
$t$ and
$$
(f+g)^{\nabla ^{\alpha }}(t)=f^{\nabla ^{\alpha }}(t)+g^{\nabla
^{\alpha }}(t).
$$
$(ii)$ Let $\varepsilon >0$. Then there exists a
neighborhood $\mathcal{U}$ of $t$ with
\begin{equation*}
\left\vert \left[ f(s)-f^{\rho }(t)\right] -f^{\nabla ^{\alpha }}(t)\left[
s-\rho (t)\right] ^{\alpha }\right\vert \leq \varepsilon \left\vert s-\rho
(t)\right\vert ^{\alpha }\text{ for all }s\in \mathcal{U}.
\end{equation*}
It follows that
\begin{equation*}
\left\vert \left[ \left( \lambda f\right) (s)-\left( \lambda f\right) ^{\rho
}(t)\right] -\lambda f^{\nabla ^{\alpha }}(t)\left[ s-\rho (t)\right]
^{\alpha }\right\vert \leq \varepsilon |\lambda |\,\left\vert s-\rho
(t)\right\vert |^{\alpha }\text{ for all }s\in \mathcal{U}.
\end{equation*}
Therefore, $\lambda f$ is fractional differentiable of order $\alpha$  at $t$ and $(\lambda
f)^{\nabla ^{\alpha }}(t)=\lambda f^{\nabla ^{\alpha }}(t)$ holds at $t$.
$(iii)$ If $t$ is left-dense, then
\begin{equation*}
\begin{split}
(fg)^{\nabla ^{\alpha }}(t)& =\lim_{s\rightarrow t}\frac{\left( fg\right)
(s)-\left( fg\right) (t)}{(s-t)^{\alpha }} \\
& =\lim_{s\rightarrow t}\frac{f(s)-f(t)}{\left( s-t\right)^{\alpha}}
g\left( s\right) +\lim_{s\rightarrow t}\frac{g(s)-g(t)}{(s-t)^{\alpha}}
f\left( t\right) \\
& =f^{\nabla^{\alpha}}\left( t\right) g\left( t\right) +f^{\rho }\left(
t\right) g^{\nabla ^{\alpha }}\left( t\right) .
\end{split}
\end{equation*}
If $t$ is left-scattered, then
\begin{equation*}
\begin{split}
\left( fg\right) ^{\nabla ^{\alpha }}(t)& =\frac{\left( fg\right) (t)-\left(
fg\right) ^{\rho }(t)}{[t-\rho (t)]^{\alpha }} \\
& =\frac{f(t)-f^{\rho }(t)}{[t-\rho \left( t\right) ]^{\alpha }}g\left(
t\right) +\frac{g(t)-g^{\rho }(t)}{[t-\rho (t)]^{\alpha }}f^{\rho }(t) \\
& =f^{\nabla ^{\alpha }}(t)g(t)+f^{\rho }(t)g^{\nabla ^{\alpha }}(t).
\end{split}
\end{equation*}
The other product rule formula follows
by interchanging the role of functions $f$ and $g$.
$(iv)$ Using the fractional derivative of a constant
(Proposition~\ref{E1:i}) and Theorem~\ref{T3} $(iii)$, we know that
\begin{equation*}
\left( f\cdot \frac{1}{f}\right) ^{\nabla ^{\alpha }}(t)=(1)^{(\alpha )}(t)=0.
\end{equation*}
Therefore,
\begin{equation*}
\left( \frac{1}{f}\right)^{\nabla ^{\alpha }}(t)f^{\rho }(t)
+f^{\nabla^{\alpha}}(t)\frac{1}{f(t)}=0.
\end{equation*}
Since we are assuming $f^{\rho }(t)\neq 0$,
\begin{equation*}
\left( \frac{1}{f}\right)^{\nabla ^{\alpha }}(t)
=-\frac{f^{\nabla ^{\alpha}}(t)}{f^{\rho }(t)f(t)},
\end{equation*}
as intended.
$(v)$ Follows trivially from the previous properties:
\begin{equation*}
\begin{split}
\left( \frac{f}{g}\right) ^{\nabla ^{\alpha }}(t)& =\left( f\cdot \frac{1}{g}
\right)^{\nabla ^{\alpha }}(t) \\
& =f(t)\left( \frac{1}{g}\right) ^{\nabla ^{\alpha }}(t)+f^{\nabla ^{\alpha
}}(t)\frac{1}{g^{\rho }(t)} \\
& =-f(t)\frac{g^{\nabla ^{\alpha }}(t)}{g^{\rho }(t)g\left( t\right)}
+f^{\nabla ^{\alpha }}(t)\frac{1}{g^{\rho }(t)} \\
& =\frac{f^{\nabla ^{\alpha }}(t)g(t)
-f(t)g^{\nabla^{\alpha }}(t)}{g(t)g(\sigma (t))}.
\end{split}
\end{equation*}
The proof is complete.
\end{proof}

The next result provides examples of how to use the algebraic properties of
the nabla fractional derivatives of order $\alpha$.

\begin{theorem}
\label{thm:der:pf}
Let $c$ be a constant, $m\in \mathbb{N}$,
and $\alpha \in \left] 0,1\right[$.
\begin{enumerate}
\item[(i)] If $\displaystyle f(t)=(t-c)^{m}$, then
\begin{equation*}
f^{\nabla ^{\alpha }}(t)
=
\begin{cases}
\displaystyle\left[ t-\rho (t)\right] ^{1-\alpha }\sum_{\nu =0}^{m-1}\left[
\rho (t)-c\right] ^{\nu }(t-c)^{m-1-\nu } & \text{ if }\alpha \neq 1, \\
\displaystyle\sum_{\nu =0}^{m-1}\left[ \rho(t)
-c\right]^{\nu}(t-c)^{m-1-\nu } & \text{ if }\alpha =1.
\end{cases}
\end{equation*}

\item[(ii)] If $\displaystyle g(t)=\frac{1}{(t-c)^{m}}$, then
\begin{equation*}
g^{\nabla^{\alpha }}(t)
=
\begin{cases}
\displaystyle-\left[ t-\rho (t)\right] ^{1-\alpha }\sum_{\nu =0}^{m-1}
\frac{1}{\left[ \rho (t)-c\right] ^{m-\nu }(t-c)^{\nu +1}}
& \text{ if }\alpha \neq 1, \\
\displaystyle-\sum_{\nu =0}^{m-1}\frac{1}{\left[ \rho(t)
-c\right]^{m-\nu}(t-c)^{\nu +1}} & \text{ if }\alpha =1,
\end{cases}
\end{equation*}
provided $\left[ \rho (t)-c\right] (t-c)\neq 0$.
\end{enumerate}
\end{theorem}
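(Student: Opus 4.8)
The plan is to reduce both parts to the pointwise formulas of Theorem~\ref{T1} (namely part~(iii) at left-scattered points and parts~(iv)/(v) at left-dense points), in the same spirit as the proofs of Propositions~\ref{E1:i} and~\ref{E1:ii}, combined with the elementary algebraic identity $a^{m}-b^{m}=(a-b)\sum_{\nu=0}^{m-1}b^{\nu}a^{m-1-\nu}$.

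For part~(i): first observe that $f(t)=(t-c)^{m}$ is a polynomial, hence continuous and (classically) nabla differentiable on $\mathbb{T}$, so by Theorem~\ref{T2} it is nabla fractional differentiable of order $\alpha$ for every $\alpha\in\,]0,1[$. If $t$ is left-scattered, Theorem~\ref{T1}\,(iii) gives
\[
f^{\nabla^{\alpha}}(t)=\frac{(t-c)^{m}-(\rho(t)-c)^{m}}{[t-\rho(t)]^{\alpha}},
\]
and substituting the above factorization with $a=t-c$, $b=\rho(t)-c$ (so that $a-b=t-\rho(t)$) yields
\[
f^{\nabla^{\alpha}}(t)=[t-\rho(t)]^{1-\alpha}\sum_{\nu=0}^{m-1}(\rho(t)-c)^{\nu}(t-c)^{m-1-\nu},
\]
as claimed. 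If $t$ is left-dense, then $\rho(t)=t$ and, since $1-\alpha>0$, the right-hand side of the claimed formula equals $0$; I would confirm this through Theorem~\ref{T1}\,(iv)/(v) by writing
\[
\lim_{s\to t}\frac{(s-c)^{m}-(t-c)^{m}}{(s-t)^{\alpha}}
=\lim_{s\to t}\frac{(s-c)^{m}-(t-c)^{m}}{s-t}\,(s-t)^{1-\alpha}
=m(t-c)^{m-1}\cdot 0=0 .
\]
The case $\alpha=1$ in the statement is just the classical nabla derivative of $(t-c)^{m}$ and requires no separate argument.

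For part~(ii), the shortest route is to apply the reciprocal rule Theorem~\ref{T3}\,(iv) to the function $f(t)=(t-c)^{m}$ treated in part~(i): $f$ is continuous and nabla fractional differentiable, and by the hypothesis $[\rho(t)-c](t-c)\neq 0$ we have $f^{\rho}(t)f(t)=(\rho(t)-c)^{m}(t-c)^{m}\neq 0$, so $g=1/f$ is nabla fractional differentiable at $t$ with
\[
g^{\nabla^{\alpha}}(t)=-\frac{f^{\nabla^{\alpha}}(t)}{f^{\rho}(t)f(t)}
=-[t-\rho(t)]^{1-\alpha}\sum_{\nu=0}^{m-1}\frac{(\rho(t)-c)^{\nu}(t-c)^{m-1-\nu}}{(\rho(t)-c)^{m}(t-c)^{m}} ,
\]
and simplifying the $\nu$-th summand via
$(\rho(t)-c)^{\nu}(t-c)^{m-1-\nu}/\bigl[(\rho(t)-c)^{m}(t-c)^{m}\bigr]=1/\bigl[(\rho(t)-c)^{m-\nu}(t-c)^{\nu+1}\bigr]$
gives the stated expression; in the left-dense case the factor $[t-\rho(t)]^{1-\alpha}$ again forces the value $0$, in agreement with Theorem~\ref{T1}\,(iv)/(v). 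One could equally repeat the direct argument of part~(i), using Theorem~\ref{T1}\,(iii) at left-scattered points and factoring the numerator $\bigl[(\rho(t)-c)^{m}-(t-c)^{m}\bigr]$ of $1/(t-c)^{m}-1/(\rho(t)-c)^{m}$. I do not anticipate any real obstacle: the only things to watch are that the reciprocal rule is genuinely applicable (which the hypotheses guarantee, since $f$ is a continuous polynomial) and that the left-dense cases be verified against Theorem~\ref{T1}\,(iv)/(v) rather than merely evaluated formally at $t-\rho(t)=0$.
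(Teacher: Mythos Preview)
Your proof is correct, but the route for part~(i) differs from the paper's. The paper proves (i) by induction on $m$: the base case $m=1$ is read off Propositions~\ref{E1:i}--\ref{E1:ii} together with Theorem~\ref{T3}\,(i), and the inductive step writes $(t-c)^{m+1}=(t-c)\cdot(t-c)^{m}$ and applies the product rule Theorem~\ref{T3}\,(iii). You bypass both induction and the product rule by going straight to the pointwise formulas of Theorem~\ref{T1} and using the factorization $a^{m}-b^{m}=(a-b)\sum_{\nu=0}^{m-1}b^{\nu}a^{m-1-\nu}$. Your approach is more self-contained and arguably more elementary; the paper's approach, on the other hand, serves its stated purpose of illustrating how the algebraic rules of Theorem~\ref{T3} are used in practice. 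For part~(ii) the two arguments coincide: both invoke the reciprocal rule Theorem~\ref{T3}\,(iv) applied to $f(t)=(t-c)^{m}$ and simplify termwise.
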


\begin{proof}
We use mathematical induction. First, let us consider
the case $\alpha \neq 0$. If $m=1$, then $f(t)=t-c$ and
$f^{\nabla^{\alpha }}(t)=\left[ t-\rho (t)\right]^{1-\alpha }$ holds from
Propositions~\ref{E1:i} and \ref{E1:ii} and Theorem~\ref{T3} $(i)$. Now,
assume that
\begin{equation*}
f^{\nabla ^{\alpha }}(t)=\left[ t-\rho (t)\right]^{1-\alpha }
\sum_{\nu=0}^{m-1}\left[ \rho (t)-c\right] ^{\nu }(t-c)^{m-1-\nu}
\end{equation*}
holds for $f(t)=(t-c)^{m}$ and let $F(t)=(t-c)^{m+1}=(t-c)f(t)$.
By Theorem~\ref{T3} $(iii)$, we have
\begin{equation*}
\begin{split}
F^{\nabla ^{\alpha }}(t)&=(t-c)^{\nabla ^{\alpha }}f^{\rho }(t)
+f^{\nabla^{\alpha }}(t)(t-c)\\
&=\left[ t-\rho (t)\right] ^{1-\alpha }f^{\rho}(t)
+f^{\nabla ^{\alpha }}(t)(t-c)\\
&=\left[ t-\rho (t)\right] ^{1-\alpha }\left[ \left[ \rho (t)-c\right]
^{m}+\sum_{\nu =0}^{m-1}\left[ \rho (t)-c\right] ^{\nu }(t-c)^{m-\nu }\right]\\
&=\left[ t-\rho (t)\right] ^{1-\alpha }\sum_{\nu =0}^{m}\left[ \rho (t)
-c\right] ^{\nu }(t-c)^{m-\nu}.
\end{split}
\end{equation*}
Hence, by mathematical induction, part $(i)$ holds. For $\displaystyle g(t)
=\frac{1}{(t-c)^{m}}=\frac{1}{f(t)}$, we apply Theorem~\ref{T3} $(iv)$ to obtain
\begin{equation*}
\begin{split}
g^{\nabla ^{\alpha }}(t)& =-\frac{f^{\nabla ^{\alpha }}(t)}{f^{\rho }(t)f(t)}
=-\left[ t-\rho (t)\right] ^{1-\alpha }\frac{\sum_{\nu =0}^{m-1}\left[ \rho
(t)-c\right] ^{\nu }(t-c)^{m-1-\nu }}{\left[ \rho (t)-c\right] ^{m}(t-c)^{m}}
\\
& =-\left[ t-\rho (t)\right] ^{1-\alpha }\sum_{\nu =0}^{m-1}\frac{1}{\left[
\rho (t)-c\right] ^{m-\nu }(t-c)^{\nu +1}},
\end{split}
\end{equation*}
provided $\left[ \rho (t)-c\right] (t-c)\neq 0$. For $\alpha =1$ the proofs
are similar bearing in mind that
$\left( t\right)^{\nabla^{1}}=\left(t\right)^{\nabla }=1$.
\end{proof}

Now we introduce the nabla fractional integral on time scales.

\begin{definition}[The indefinite nabla fractional integral]
\label{def:int}
Assume that $f:\mathbb{T}\rightarrow \mathbb{R}$ is a
regulated function. We define the indefinite nabla fractional integral of $f$
of order $\beta$, $0\leq \beta \leq 1$, by
\begin{equation*}
\int f(t)\nabla^{\beta }t
:=\left(\int f(t)\nabla t\right)^{\nabla^{(1-\beta )}}
\end{equation*}
with $\int f(t)\nabla t$ the usual indefinite nabla integral of time
scales \cite{Bohner:1}.
\end{definition}

\begin{remark}
It follows from Definition~\ref{def:int} that
$$
\int f(t)\nabla ^{1}t=\int f(t)\nabla t,
\quad \int f(t)\nabla^{0}t=f(t).
$$
\end{remark}

\begin{definition}[The definite nabla fractional integral]
\label{def:intFracCauchy}
Assume $f:\mathbb{T}\rightarrow \mathbb{R}$ is a ld-continuous function. Let
\begin{equation*}
F^{\nabla ^{\beta }}(t)=\int f(t)\nabla ^{\beta }t
\end{equation*}
denote the indefinite nabla fractional integral of $f$ of order $\beta$
with $0\leq \beta \leq 1$, and let $a,b\in \mathbb{T}$.
We define the Cauchy nabla fractional integral from $a$ to $b$ by
\begin{equation*}
\int_{a}^{b}f(t)\nabla ^{\beta }t
:=\left. F^{\nabla ^{\beta }}(t)\right\vert_{a}^{b}
=F^{\nabla ^{\beta }}(b)-F^{\nabla ^{\beta }}(a).
\end{equation*}
\end{definition}

The next theorem gives some algebraic properties of the nabla fractional integral.

\begin{theorem}
\label{T4}
If $a,b,c\in \mathbb{T}$, $\lambda \in \mathbb{R}$, and $f,g\in
\mathcal{C}_{ld}$ with $0\leq \beta \leq 1$, then

\begin{enumerate}
\item[(i)] $\displaystyle\int_{a}^{b}[f(t)+g(t)]\nabla ^{\beta}t
=\int_{a}^{b}f(t)\nabla ^{\beta }t+\int_{a}^{b}g(t)\nabla ^{\beta }t$;

\item[(ii)] $\displaystyle\int_{a}^{b}(\lambda f)(t)\nabla ^{\beta}t
=\lambda \int_{a}^{b}f(t)\nabla ^{\beta }t$;

\item[(iii)] $\displaystyle\int_{a}^{b}f(t)\nabla ^{\beta}t
=-\int_{b}^{a}f(t)\nabla ^{\beta }t$;

\item[(iv)] $\displaystyle\int_{a}^{b}f(t)\nabla ^{\beta}t
=\int_{a}^{c}f(t)\nabla ^{\beta }t+\int_{c}^{b}f(t)\nabla ^{\beta }t$;

\item[(v)] $\displaystyle\int_{a}^{a}f(t)\nabla ^{\beta }t=0$.
\end{enumerate}
\end{theorem}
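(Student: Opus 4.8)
The plan is to reduce every assertion to Definitions~\ref{def:int} and~\ref{def:intFracCauchy}, combined with the linearity of the ordinary nabla integral and the algebraic rules of Theorem~\ref{T3}. Fix a nabla antiderivative $G(t):=\int f(t)\nabla t$ of $f$ and a nabla antiderivative $H(t):=\int g(t)\nabla t$ of $g$; these exist because $f,g\in\mathcal{C}_{ld}$, by the existence theorem recalled in Section~\ref{Preliminaries}. Since $G$ and $H$ are nabla differentiable, hence continuous and nabla fractional differentiable of order $1$, Theorem~\ref{T2} (applied with the roles $\beta\mapsto 1$, $\alpha\mapsto 1-\beta$) shows that $G$ and $H$ are nabla fractional differentiable of order $1-\beta$ at every point of $\mathbb{T}$ when $\beta\in[0,1)$; for $\beta=1$ one instead invokes the convention $\int f\,\nabla^{0}t=f$ from the remark following Definition~\ref{def:int}. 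Thus $F^{\nabla^{\beta}}(t)=G^{\nabla^{1-\beta}}(t)$ is a genuine function of $t$ and the Cauchy nabla fractional integral $\int_{a}^{b}f(t)\nabla^{\beta}t=F^{\nabla^{\beta}}(b)-F^{\nabla^{\beta}}(a)$ is well defined.

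Properties (iii), (iv), and (v) are then purely formal consequences of this difference structure. For (iii),
\[
\int_{a}^{b}f(t)\nabla^{\beta}t=F^{\nabla^{\beta}}(b)-F^{\nabla^{\beta}}(a)=-\bigl(F^{\nabla^{\beta}}(a)-F^{\nabla^{\beta}}(b)\bigr)=-\int_{b}^{a}f(t)\nabla^{\beta}t .
\]
For (iv), the two inner occurrences of $F^{\nabla^{\beta}}(c)$ cancel in $\bigl(F^{\nabla^{\beta}}(c)-F^{\nabla^{\beta}}(a)\bigr)+\bigl(F^{\nabla^{\beta}}(b)-F^{\nabla^{\beta}}(c)\bigr)$, leaving $F^{\nabla^{\beta}}(b)-F^{\nabla^{\beta}}(a)$. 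For (v), $\int_{a}^{a}f(t)\nabla^{\beta}t=F^{\nabla^{\beta}}(a)-F^{\nabla^{\beta}}(a)=0$. None of these uses more than the definition.

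For (i) and (ii) I would first use the linearity of the ordinary nabla integral \cite{Bohner:1}: $G+H$ is a nabla antiderivative of $f+g$ and $\lambda G$ is one of $\lambda f$. The ambiguity of an antiderivative by an additive constant is harmless, since adding a constant changes neither the nabla fractional derivative of order $1-\beta$ (by Proposition~\ref{E1:i} together with Theorem~\ref{T3}(i)) nor, in any case, the difference $F^{\nabla^{\beta}}(b)-F^{\nabla^{\beta}}(a)$. Then Theorem~\ref{T3}(i)--(ii) gives $(G+H)^{\nabla^{1-\beta}}=G^{\nabla^{1-\beta}}+H^{\nabla^{1-\beta}}$ and $(\lambda G)^{\nabla^{1-\beta}}=\lambda\,G^{\nabla^{1-\beta}}$, that is, $\int(f+g)(t)\nabla^{\beta}t=\int f(t)\nabla^{\beta}t+\int g(t)\nabla^{\beta}t$ and $\int(\lambda f)(t)\nabla^{\beta}t=\lambda\int f(t)\nabla^{\beta}t$ as indefinite nabla fractional integrals; evaluating at $b$ and at $a$ and subtracting yields (i) and (ii).

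The only point that is not bookkeeping — the ``main obstacle'', such as it is — is the preliminary step showing that $G=\int f\,\nabla t$ is actually nabla fractional differentiable of order $1-\beta$, so that $F^{\nabla^{\beta}}$ exists at all, and that the Cauchy integral is independent of the choice of nabla antiderivative; both are settled by combining the existence theorem for the nabla antiderivative, Theorem~\ref{T2}, and Proposition~\ref{E1:i}, after which everything reduces to manipulating differences of the single function $F^{\nabla^{\beta}}$.
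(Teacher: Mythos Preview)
Your proof is correct and follows essentially the same route as the paper: both reduce (iii)--(v) to the difference structure $F^{\nabla^{\beta}}(b)-F^{\nabla^{\beta}}(a)$ directly from Definition~\ref{def:intFracCauchy}, and both obtain (i)--(ii) by combining linearity of the ordinary nabla integral with the linearity of the fractional nabla derivative (Theorem~\ref{T3}(i)--(ii)). Your write-up is in fact more careful than the paper's, since you explicitly justify via Theorem~\ref{T2} that $G=\int f\,\nabla t$ is nabla fractional differentiable of order $1-\beta$ and that the Cauchy integral is independent of the chosen antiderivative (via Proposition~\ref{E1:i}); the paper simply takes these points for granted.
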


\begin{proof}
The equalities follow from Definitions~\ref{def:int} and \ref{def:intFracCauchy},
analogous properties of the nabla integral on time scales,
and the properties of Section~\ref{NSFC} for the fractional
nabla derivative on time scales. $(i)$
From Definition~\ref{def:intFracCauchy}, we have
\begin{equation*}
\begin{split}
\int_{a}^{b}(f+g)(t)\nabla ^{\beta }t & =\left. \int \left[ f(t)+g(t)\right]
\nabla ^{\beta }t\right\vert _{a}^{b} \\
& =\left. \left( \int \left[ f(t)+g(t)
\right] \nabla t\right) ^{\nabla ^{(1-\beta )}}\right\vert _{a}^{b}\\
& =\int_{a}^{b}f(t)\nabla ^{\beta }t+\int_{a}^{b}g(t)\nabla ^{\beta }t.
\end{split}
\end{equation*}
$(ii)$ From Definitions~\ref{def:intFracCauchy} and \ref{def:int}, one has
\begin{equation*}
\begin{split}
\int_{a}^{b}(\lambda f)(t)\nabla ^{\beta }t
&=\left. \int (\lambda f)(t)\nabla^{\beta }t\right\vert _{a}^{b}
=\left.\left( \int (\lambda
f)(t)\nabla t\right)^{\nabla^{(1-\beta )}}\right\vert_{a}^{b}\\
&=\left. \lambda \left(\int f(t)
\nabla t\right)^{\nabla^{(1-\beta)}}\right\vert_{a}^{b}
=\lambda \int_{a}^{b}f(t)\nabla^{\beta }t.
\end{split}
\end{equation*}
The last properties $(iii)$, $(iv)$ and $(v)$
are direct consequences of
Definition~\ref{def:intFracCauchy}: $(iii)$
\begin{equation*}
\int_{a}^{b}f(t)\nabla ^{\beta }t=F^{\beta }(b)-F^{\beta }(a)
=-\left(F^{\beta }(a)-F^{\beta }(b)\right)
=-\int_{b}^{a}f(t)\nabla ^{\beta }t;
\end{equation*}
$(iv)$
\begin{equation*}
\begin{split}
\int_{a}^{b}f(t)\nabla ^{\beta }t& =F^{\nabla ^{\beta }}(b)-F^{\nabla
^{\beta }}(a)=F^{\nabla ^{\beta }}(c)-F^{\nabla ^{\beta }}(a)+F^{\nabla
^{\beta }}(b)-F^{\nabla ^{\beta }}(c) \\
& =\int_{a}^{c}f(t)\nabla ^{\beta }t+\int_{c}^{b}f(t)\nabla ^{\beta }t;
\end{split}
\end{equation*}
$(v)$
\begin{equation*}
\int_{a}^{a}f(t)\nabla ^{\beta }t
=F^{\nabla ^{\beta }}(a)-F^{\nabla^{\beta}}(a)=0.
\end{equation*}
This concludes the proof.
\end{proof}

We end this section with a simple example
of a discrete fractional integral of order $\alpha$.

\begin{example}
Let $\mathbb{T}=\mathbb{Z}$,
$0\leq \beta < 1$, and $f(t)=t$.
Using the fact that
\begin{equation*}
\int t\nabla t=\frac{t^{2}}{2}+C,
\end{equation*}
where $C$ is a constant, we have
\begin{equation*}
\int_{1}^{10}t\,\nabla ^{\beta }t=\left. \int t\,\nabla ^{\beta
}t\right\vert _{1}^{10}=\left. \left( \int t\,\nabla t\right) ^{\nabla
^{(1-\beta )}}\right\vert _{1}^{10}=\left. \left( \frac{t^{2}}{2}+C\right)
^{(1-\beta )}\right\vert _{1}^{10}.
\end{equation*}
It follows that
\begin{equation*}
\int_{1}^{10}t\,\nabla ^{\beta }t=\left.[
t-\rho (t)]^{(1-\alpha)} [\rho(t)+t]\right\vert _{1}^{10}
=\left. \frac{1}{2}\left( 2t-1\right)
\right\vert _{1}^{10}=\frac{19}{2}-\frac{1}{2}=9.
\end{equation*}
\end{example}

The fundamental concepts of the nabla fractional calculus of order $\alpha$,
which are the differentiation and integration of noninteger order
using the nabla operator, were presented in this section. Instead of using the nabla
operator, we could use the delta operator and we would obtain an analogous delta
fractional calculus of order $\alpha$. The properties of the delta
fractional calculus of order $\alpha$ are similar to the properties of the nabla case.
In the next section we will use both nabla and delta approaches,
the delta and nabla fractional calculi of order $\alpha$,
to obtain useful results of the symmetric fractional calculus of order $\alpha$
and extend the results of \cite{Cruz:2}.

% -------------------

\subsection{Symmetric fractional calculus}
\label{SFC}

Let us now introduce the notion of symmetric fractional
derivative of order $\alpha\in]0,1]$ on time scales.

\begin{definition}[The symmetric fractional derivative]
Let $f:\mathbb{T}\rightarrow \mathbb{R}$, $t\in \mathbb{T}_{\kappa }^{\kappa}$,
and $\alpha \in \left] 0,1\right]$. The symmetric fractional derivative
of $f$ at $t$, denoted by $f^{\diamondsuit ^{\alpha }}\left( t\right) $, is
the real number (provided it exists) with the property that, for any
$\varepsilon >0$, there exists a neighborhood $U\subset \mathbb{T}$ of $t$
such that
\begin{multline*}
\left\vert \left[ f^{\sigma }\left( t\right) -f\left( s\right) +f\left(
2t-s\right) -f^{\rho }\left( t\right) \right]-f^{\diamondsuit^{\alpha}}\left( t\right)
\left[ \sigma \left( t\right) +2t-2s-\rho \left( t\right)\right]^{\alpha }\right\vert\\
\leq \varepsilon \left\vert \sigma \left(t\right)
+2t-2s-\rho \left( t\right) \right\vert^{\alpha}
\end{multline*}
for all $s\in U$ for which $2t-s\in U$. A function $f$ is said to be
symmetric fractional differentiable of order $\alpha$ provided $f^{\diamondsuit ^{\alpha
}}\left( t\right) $ exists for all $t\in \mathbb{T}_{\kappa }^{\kappa }$.
\end{definition}

\begin{remark}
If $\alpha =1$, then the symmetric fractional derivative is the symmetric
derivative on time scales \cite{Cruz:2}.
\end{remark}

Some useful properties of the symmetric derivative
are given in Theorem~\ref{ts:propriedade}.

\begin{theorem}
\label{ts:propriedade}
Let $f:\mathbb{T}\rightarrow \mathbb{R}$, $t\in\mathbb{T}_{\kappa }^{\kappa }$
and $\alpha \in \left] 0,1\right] $. The following holds:
\begin{enumerate}
\item[(i)] Function $f$ has at most one
symmetric fractional derivative of order $\alpha$.

\item[(ii)] If $f$ is symmetric fractional differentiable
of order $\alpha$ at $t$ and $t$ is dense or isolated,
then $f$ is symmetric continuous at $t$ (Definition~\ref{def:sc}).

\item[(iii)] If $f$ is continuous at $t$ and $t$ is not dense, then $f$ is
symmetric differentiable of order $\alpha$ at $t$ with
\begin{equation*}
f^{\diamondsuit ^{\alpha }}\left( t\right) =\frac{f^{\sigma }\left( t\right)
-f^{\rho }\left( t\right) }{\left[ \sigma \left( t\right) -\rho \left(
t\right) \right] ^{\alpha }}.
\end{equation*}

\item[(iv)] If $t$ is dense, then $f$
is symmetric fractional differentiable of order $\alpha$
at $t$ if and only if the limit
\begin{equation*}
\lim_{s\rightarrow t}\frac{f\left( 2t-s\right) -f\left( s\right) }{2^{\alpha
}\left( t-s\right)^{\alpha }}
\end{equation*}
exists as a finite number. In this case,
\begin{equation*}
f^{\diamondsuit ^{\alpha }}\left( t\right)
=\lim_{s\rightarrow t}\frac{f\left( 2t-s\right)
-f\left( s\right) }{2^{\alpha }\left( t-s\right)^{\alpha}}
=\lim_{h\rightarrow 0}\frac{f\left( t+h\right) -f\left(
t-h\right)}{2^{\alpha }h^{\alpha}}.
\end{equation*}

\item[(v)] If $f$ is symmetric differentiable of order $\alpha$ and continuous at $t$, then
\begin{equation*}
f^{\sigma }\left( t\right) =f^{\rho }\left( t\right) +f^{\diamondsuit
^{\alpha }}\left( t\right) \left[ \sigma \left( t\right) -\rho \left(
t\right) \right] ^{\alpha }.
\end{equation*}
\end{enumerate}
\end{theorem}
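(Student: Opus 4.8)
The plan is to prove the five parts of Theorem~\ref{ts:propriedade} largely by adapting the techniques already used for the nabla fractional derivative in Theorem~\ref{T1}, but now working with the symmetric difference quotient. For part~$(i)$, uniqueness, I would suppose $L_1$ and $L_2$ are both symmetric fractional derivatives at $t$; given $\varepsilon>0$ pick a common neighborhood $U$ on which both defining inequalities hold, and for a suitable $s\in U$ with $2t-s\in U$ (and $\sigma(t)+2t-2s-\rho(t)\neq 0$, which can be arranged whether $t$ is dense, left/right-scattered, or isolated) apply the triangle inequality to conclude $|L_1-L_2|\,|\sigma(t)+2t-2s-\rho(t)|^{\alpha}\leq 2\varepsilon|\sigma(t)+2t-2s-\rho(t)|^{\alpha}$, forcing $L_1=L_2$. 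The only subtlety is guaranteeing a usable $s$: if $t$ is dense one takes $s\to t$ along $\mathbb{T}$; if $t$ is isolated then $\sigma(t)>t>\rho(t)$ and $s=t$ already gives a nonzero increment; the mixed scattered-dense cases are handled similarly.

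For part~$(ii)$, symmetric continuity at a dense or isolated $t$: when $t$ is isolated, symmetric continuity is automatic because for $s$ close enough to $t$ the only admissible $s$ with $2t-s\in U_t$ is $s=t$ itself, so $|f(s)-f(2t-s)|=0$. When $t$ is dense, $\sigma(t)=\rho(t)=t$, so the defining inequality reduces to $|f(2t-s)-f(s)-f^{\diamondsuit^{\alpha}}(t)(2t-2s)^{\alpha}|\leq\varepsilon|2t-2s|^{\alpha}$, and from this I would bound $|f(s)-f(2t-s)|\leq\varepsilon|2(t-s)|^{\alpha}+|f^{\diamondsuit^{\alpha}}(t)|\,|2(t-s)|^{\alpha}$, which tends to $0$ as $s\to t$, exactly as in the proof of Theorem~\ref{T1}~$(i)$. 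For part~$(iii)$, $t$ not dense (so $\sigma(t)-\rho(t)>0$) and $f$ continuous at $t$: by continuity $f^{\sigma}(t)-f(s)+f(2t-s)-f^{\rho}(t)\to f^{\sigma}(t)-f(t)+f(t)-f^{\rho}(t)=f^{\sigma}(t)-f^{\rho}(t)$ and $\sigma(t)+2t-2s-\rho(t)\to\sigma(t)-\rho(t)$ as $s\to t$, so the quotient converges to $\frac{f^{\sigma}(t)-f^{\rho}(t)}{[\sigma(t)-\rho(t)]^{\alpha}}$; rewriting the limit statement as an $\varepsilon$-$\delta$ inequality gives precisely the defining property with $f^{\diamondsuit^{\alpha}}(t)$ equal to this value.

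Part~$(iv)$ is the characterization at a dense point and is the closest analogue of Theorem~\ref{T1}~$(iv)$: since $\sigma(t)=\rho(t)=t$ the defining inequality becomes $|[f(2t-s)-f(s)]-f^{\diamondsuit^{\alpha}}(t)(2t-2s)^{\alpha}|\leq\varepsilon|2t-2s|^{\alpha}$ for all $s\in U$, which after dividing by $|2(t-s)|^{\alpha}=2^{\alpha}|t-s|^{\alpha}$ says exactly that $\frac{f(2t-s)-f(s)}{2^{\alpha}(t-s)^{\alpha}}\to f^{\diamondsuit^{\alpha}}(t)$; conversely, if that limit exists and equals $L$, reversing the algebra and noting the inequality is trivial at $s=t$ shows $f$ is symmetric fractional differentiable with $f^{\diamondsuit^{\alpha}}(t)=L$. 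The substitution $s=t-h$ gives the second form with $h\to 0$. For part~$(v)$ I would split into cases as in Theorem~\ref{T1}~$(vi)$: if $t$ is dense then $\sigma(t)=\rho(t)=t$ and the claimed identity reads $f(t)=f(t)+f^{\diamondsuit^{\alpha}}(t)\cdot 0$, which is trivial; if $t$ is not dense, apply part~$(iii)$ and multiply through by $[\sigma(t)-\rho(t)]^{\alpha}$.

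The main obstacle is the case bookkeeping around the sign and vanishing of the symmetric increment $\sigma(t)+2t-2s-\rho(t)$: unlike the nabla case, a point $t\in\mathbb{T}_{\kappa}^{\kappa}$ can be left-scattered but right-dense or vice versa, so ``not dense'' in $(iii)$ subsumes three configurations, and in each one I must check both that $\sigma(t)-\rho(t)>0$ (so the denominator is legitimate) and that admissible $s$ near $t$ keep $\sigma(t)+2t-2s-\rho(t)$ away from zero so division is valid; handling the constraint $2t-s\in U$ simultaneously with $s\in U$ also requires choosing $U$ symmetric about $t$, which is harmless but must be stated. Once these domain issues are dispatched, every estimate is a routine triangle-inequality argument modeled on the proof of Theorem~\ref{T1}.
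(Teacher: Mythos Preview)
Your proposal is correct and follows essentially the same approach as the paper: both proceed part by part via triangle-inequality estimates modeled on Theorem~\ref{T1}, specializing the defining inequality at dense points (where $\sigma(t)=\rho(t)=t$) and using continuity at non-dense points to pass to the limit of the quotient. Your treatment is in fact slightly more careful than the paper's in part~$(i)$ (explicitly securing an admissible $s$ with nonzero increment in each scattered/dense configuration) and in part~$(ii)$ for isolated $t$ (arguing directly that the only admissible $s$ is $t$ itself, whereas the paper invokes continuity in the inherited topology and then the implication continuity $\Rightarrow$ symmetric continuity); these are cosmetic differences, not substantive ones.
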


\begin{proof}
$(i)$ Suppose that $f$ has two symmetric derivatives of order $\alpha$ at $t$,
$f_{1}^{\diamondsuit ^{\alpha }}\left( t\right) $ and
$f_{2}^{\diamondsuit^{\alpha }}\left( t\right) $. Then,
there exists a neighborhood $U_{1}$ of $t$ such that
\begin{multline*}
\left\vert \left[ f^{\sigma }\left( t\right) -f\left( s\right) +f\left(
2t-s\right) -f^{\rho }\left( t\right) \right]
-f_{1}^{\diamondsuit ^{\alpha}}\left( t\right) \left[ \sigma \left( t\right)
+2t-2s-\rho \left( t\right)\right]^{\alpha }\right\vert\\
\leq \frac{\varepsilon }{2}\left\vert \sigma\left( t\right)
+2t-2s-\rho \left( t\right) \right\vert ^{\alpha }
\end{multline*}
for all $s\in U_{1}$ for which $2t-s\in U_{1}$, and a neighborhood $U_{2}$
of $t$ such that
\begin{multline*}
\left\vert \left[ f^{\sigma }\left( t\right) -f\left( s\right) +f\left(
2t-s\right) -f^{\rho }\left( t\right) \right]
-f_{2}^{\diamondsuit ^{\alpha}}\left( t\right) \left[ \sigma \left( t\right)
+2t-2s-\rho \left( t\right)\right] ^{\alpha }\right\vert\\
\leq \frac{\varepsilon }{2}\left\vert \sigma\left( t\right)
+2t-2s-\rho \left( t\right) \right\vert^{\alpha}
\end{multline*}
for all $s\in U_{2}$ for which $2t-s\in U_{2}$. Therefore, for all $s\in
U_{1}\cap U_{2}$ for which $2t-s\in U_{1}\cap U_{2}$,
\begin{equation*}
\begin{split}
\bigg{|}& f_{1}^{\diamondsuit ^{\alpha }}\left( t\right)
-f_{2}^{\diamondsuit ^{\alpha }}\left( t\right) \bigg{|}=\left\vert \left[
f_{1}^{\diamondsuit ^{\alpha }}\left( t\right) -f_{2}^{\diamondsuit ^{\alpha
}}\left( t\right) \right] \frac{\left[ \sigma \left( t\right) +2t-2s-\rho
\left( t\right) \right] ^{\alpha }}{\left[ \sigma \left( t\right)
+2t-2s-\rho \left( t\right) \right] ^{\alpha }}\right\vert \\
& \leq \frac{\left\vert \left[ f^{\sigma }\left(
t\right) -f\left( s\right) +f\left( 2t-s\right) -f^{\rho }\left( t\right)
\right] -f_{2}^{\diamondsuit ^{\alpha }}\left( t\right) \left[ \sigma \left(
t\right) +2t-2s-\rho \left( t\right) \right] ^{\alpha }\right\vert}{\left\vert
\sigma \left( t\right) +2t-2s-\rho \left(t\right) \right\vert ^{\alpha }} \\
& \qquad +\frac{\left\vert \left[ f^{\sigma }\left( t\right) -f\left( s\right)
+f\left( 2t-s\right) -f^{\rho }\left( t\right) \right] -f_{1}^{\diamondsuit
^{\alpha }}\left( t\right) \left[ \sigma \left( t\right) +2t-2s-\rho \left(
t\right) \right] ^{\alpha }\right\vert}{\left\vert
\sigma \left( t\right) +2t-2s-\rho \left(t\right) \right\vert^{\alpha }}\\
& \leq \varepsilon .
\end{split}
\end{equation*}
$(ii)$ From hypothesis, for any $\varepsilon >0$, there exists a
neighborhood $U$ of $t$ such that
\begin{multline*}
\left\vert \left[ f^{\sigma }\left( t\right) -f\left( s\right) +f\left(
2t-s\right) -f^{\rho }\left( t\right) \right]
-f^{\diamondsuit ^{\alpha}}\left( t\right) \left[ \sigma \left( t\right)
+2t-2s-\rho \left( t\right)\right]^{\alpha }\right\vert\\
\leq \varepsilon \left\vert \sigma \left(t\right)
+2t-2s-\rho \left( t\right) \right\vert^{\alpha}
\end{multline*}
for all $s\in U$ for which $2t-s\in U$. Note that
\begin{equation*}
\begin{split}
\vert f&\left( s\right) -f\left( 2t-s\right) \vert \\
&\leq \left\vert \left[ f^{\sigma }\left( t\right) -f\left( s\right)
+f\left( 2t-s\right) -f^{\rho }\left( t\right) \right] -f^{\diamondsuit
^{\alpha }}\left( t\right) \left[ \sigma \left( t\right) +2t-2s-\rho \left(
t\right) \right] ^{\alpha }\right\vert \\
&\quad +\left\vert \left[ f^{\sigma }\left( t\right) -f\left( t\right) +f\left(
t\right) -f^{\rho }\left( t\right) \right] -f^{\diamondsuit ^{\alpha
}}\left( t\right) \left[ \sigma \left( t\right) +2t-2t-\rho \left( t\right)
\right] ^{\alpha }\right\vert \\
&\quad +\left\vert f^{\diamondsuit ^{\alpha }}\left( t\right) \left[ \sigma
\left( t\right) +2t-2s-\rho \left( t\right) \right] ^{\alpha}
-f^{\diamondsuit ^{\alpha }}\left( t\right) \left[ \sigma \left( t\right)
-\rho \left( t\right) \right] ^{\alpha }\right\vert \\
&\leq \varepsilon \left\vert \sigma \left( t\right) +2t-2s-\rho \left(
t\right) \right\vert ^{\alpha }+\varepsilon \left\vert \sigma \left(
t\right) +2t-2t-\rho \left( t\right) \right\vert ^{\alpha } \\
&\quad +\left\vert f^{\diamondsuit ^{\alpha }}\left( t\right) \left[ \sigma
\left( t\right) +2t-2s-\rho \left( t\right) \right] ^{\alpha}
-f^{\diamondsuit ^{\alpha }}\left( t\right) \left[ \sigma \left( t\right)
-\rho \left( t\right) \right] ^{\alpha }\right\vert .
\end{split}
\end{equation*}
If $t$ is dense, then
\begin{equation*}
\begin{split}
\left\vert f\left( s\right) -f\left( 2t-s\right) \right\vert
&\leq \varepsilon 2^{\alpha }\left\vert t-s\right\vert ^{\alpha }+\left\vert
f^{\diamondsuit ^{\alpha }}\left( t\right) \right\vert 2^{\alpha }\left\vert
t-s\right\vert ^{\alpha } \\
&\leq \varepsilon ^{\alpha }2^{\alpha }\left( \varepsilon +\left\vert
f^{\diamondsuit ^{\alpha }}\left( t\right) \right\vert \right)
\end{split}
\end{equation*}
for all $s\in U\cap \left] t-\varepsilon, t+\varepsilon \right[$,
which proves the result for a point $t$ which is dense. If $t$ is isolated,
then the function is continuous at $t$ (because of the inherited topology)
and therefore the function is symmetric continuous at $t$.
$(iii)$ Suppose that $t\in \mathbb{T}_{\kappa }^{\kappa }$ is not
dense and $f$ is continuous at $t$. Then,
\begin{equation*}
\lim_{s\rightarrow t}\frac{f^{\sigma }\left( t\right) -f\left( s\right)
+f\left( 2t-s\right) -f^{\rho }\left( t\right) }{\left[ \sigma \left(
t\right) +2t-2s-\rho \left( t\right) \right] ^{\alpha }}=\frac{f^{\sigma
}\left( t\right) -f^{\rho }\left( t\right) }{\left[ \sigma \left( t\right)
-\rho \left( t\right) \right] ^{\alpha }}.
\end{equation*}
Hence, for any $\varepsilon >0$, there exists a neighborhood $U$ of $t$ such
that
\begin{equation*}
\left\vert \frac{f^{\sigma }\left( t\right) -f\left( s\right) +f\left(
2t-s\right) -f^{\rho }\left( t\right) }{\left[ \sigma \left( t\right)
+2t-2s-\rho \left( t\right) \right] ^{\alpha }}-\frac{f^{\sigma }\left(
t\right) -f^{\rho }\left( t\right) }{\left[ \sigma \left( t\right) -\rho
\left( t\right) \right] ^{\alpha }}\right\vert \leq \varepsilon
\end{equation*}
for all $s\in U$ for which $2t-s\in U$. It follows that
\begin{multline*}
\left\vert \left[ f^{\sigma }\left( t\right) -f\left( s\right) +f\left(
2t-s\right) -f^{\rho }\left( t\right) \right] -\frac{f^{\sigma }\left(
t\right) -f^{\rho }\left( t\right) }{\left[ \sigma \left( t\right) -\rho
\left( t\right) \right] ^{\alpha }}\left[ \sigma \left( t\right) +2t-2s-\rho
\left( t\right) \right] ^{\alpha }\right\vert \\
\leq \varepsilon \left\vert \sigma \left( t\right) +2t-2s-\rho \left(
t\right) \right\vert ^{\alpha },
\end{multline*}
which proves that
\begin{equation*}
f^{\diamondsuit ^{\alpha }}\left( t\right) =\frac{f^{\sigma }\left( t\right)
-f^{\rho }\left( t\right) }{\left[ \sigma \left( t\right) -\rho \left(
t\right) \right] ^{\alpha }}.
\end{equation*}
$(iv)$ Assume that $f$ is symmetric fractional differentiable of order $\alpha$ at $t$
and $t$ is dense. Let $\varepsilon >0$ be given. Then, there exists a
neighborhood $U$ of $t$ such that
\begin{multline*}
\left\vert \left[ f^{\sigma }\left( t\right) -f\left( s\right) +f\left(
2t-s\right) -f^{\rho }\left( t\right) \right]
-f^{\diamondsuit ^{\alpha}}\left( t\right) \left[ \sigma \left( t\right)
+2t-2s-\rho \left( t\right)\right] ^{\alpha }\right\vert\\
\leq \varepsilon \left\vert \sigma \left(t\right)
+2t-2s-\rho \left( t\right) \right\vert^{\alpha}
\end{multline*}
for all $s\in U$ for which $2t-s\in U$. Since $t$ is dense,
$$\left\vert \left[ -f\left( s\right) +f\left( 2t-s\right) \right] -f^{\diamondsuit
^{\alpha }}\left( t\right) \left[ 2t-2s\right] ^{\alpha }\right\vert \leq
\varepsilon \left\vert 2t-2s\right\vert ^{\alpha }$$
 for all $s\in U$ for
which $2t-s\in U$. It follows that
\begin{equation*}
\left\vert \frac{f\left( 2t-s\right) -f\left( s\right) }{\left( 2t-2s\right)
^{\alpha }}-f^{\diamondsuit ^{\alpha }}\left( t\right) \right\vert \leq
\varepsilon
\end{equation*}
for all $s\in U$ with $s\neq t$. Therefore, we get the desired result:
\begin{equation*}
f^{\diamondsuit^{\alpha }}\left( t\right)
=\lim_{s\rightarrow t}\frac{f\left( 2t-s\right)
-f\left( s\right) }{2^{\alpha }\left( t-s\right)^{\alpha }}.
\end{equation*}
Conversely, let us suppose that $t$ is dense and the limit
\begin{equation*}
\lim_{s\rightarrow t}\frac{f\left( 2t-s\right) -f\left( s\right) }{2^{\alpha
}\left( t-s\right) ^{\alpha }}=:L
\end{equation*}
exists. Then, there exists a neighborhood $U$ of $t$ such that $\left\vert
\frac{f\left( 2t-s\right) -f\left( s\right) }{2^{\alpha }\left( t-s\right)
^{\alpha }}-L\right\vert \leq \varepsilon $ for all $s\in U$ for which
$2t-s\in U$. Because $t$ is dense, we have
\begin{equation*}
\left\vert \frac{f^{\sigma }\left( t\right) -f\left( s\right) +f\left(
2t-s\right) -f^{\rho }\left( t\right) }{\left[ \sigma \left( t\right)
+2t-2s-\rho \left( t\right) \right] ^{\alpha }}-L\right\vert \leq
\varepsilon .
\end{equation*}
Therefore,
\begin{multline*}
\left\vert \left[ f^{\sigma }\left( t\right) -f\left( s\right) +f\left(
2t-s\right) -f^{\rho }\left( t\right) \right] -L\left[ \sigma \left(
t\right) +2t-2s-\rho \left( t\right) \right]^{\alpha }\right\vert\\
\leq \varepsilon \left\vert \sigma \left( t\right)
+2t-2s-\rho \left( t\right)\right\vert^{\alpha },
\end{multline*}
which leads us to the conclusion that $f$ is symmetric differentiable
of order $\alpha$ and $f^{\diamondsuit ^{\alpha }}\left( t\right) =L$.
Note that if we use the substitution $s=t+h$, then
\begin{equation*}
f^{\diamondsuit^{\alpha }}\left( t\right) =\lim_{h\rightarrow 0}
\frac{f\left( t+h\right) -f\left( t-h\right) }{2^{\alpha }h^{\alpha}}.
\end{equation*}
$(v)$ If $t$ is a dense point, then $\sigma \left( t\right)
=\rho\left( t\right)$ and
$$
f^{\sigma }\left( t\right) =f^{\rho }\left( t\right)
+f^{\diamondsuit ^{\alpha }}\left( t\right) \left[ \sigma \left( t\right)
-\rho \left( t\right) \right]^{\alpha}.
$$
If $t$ is not dense, and since $f$
is continuous, then
\begin{equation*}
f^{\diamondsuit ^{\alpha }}\left( t\right) =\frac{f^{\sigma }\left( t\right)
-f^{\rho }\left( t\right) }{\left[ \sigma \left( t\right) -\rho \left(
t\right) \right] ^{\alpha }}\Leftrightarrow f^{\sigma }\left( t\right)
=f^{\rho }\left( t\right) +f^{\diamondsuit }\left( t\right) \left[ \sigma
\left( t\right) -\rho \left( t\right) \right] ^{\alpha }.
\end{equation*}
This concludes the proof.
\end{proof}

\begin{remark}
\label{rem:alt:def}
An alternative way to define the symmetric fractional derivative of $f$
of order $\alpha \in \left] 0,1\right]$ at $t\in \mathbb{T}_{\kappa}^{\kappa}$
consists in saying that the limit
\begin{equation*}
\begin{split}
f^{\diamondsuit ^{\alpha }}\left( t\right)
&=\lim_{s\rightarrow t}
\frac{f^{\sigma }\left( t\right) -f\left( s\right) +f\left( 2t-s\right) -f^{\rho}
\left( t\right) }{\left[ \sigma \left( t\right) +2t-2s-\rho \left( t\right)
\right]^{\alpha }}\\
&=\lim_{h\rightarrow 0}\frac{f^{\sigma }\left( t\right)
-f\left( t+h\right) +f\left( t-h\right) -f^{\rho }\left( t\right) }{\left[
\sigma \left( t\right) -2h-\rho \left( t\right) \right] ^{\alpha }}
\end{split}
\end{equation*}
exists. Similarly, we can say that the nabla fractional derivative of $f$
of order $\alpha $ is defined by
\begin{equation*}
f^{\nabla^{\alpha }}\left( t\right) =\lim_{s\rightarrow t}
\frac{f(s)-f^{\rho }(t)}{\left[ s-\rho (t)\right]^{\alpha}}
\end{equation*}
and the delta fractional derivative of $f$ of order $\alpha $ is defined by
\begin{equation*}
f^{\Delta ^{\alpha }}\left( t\right) =\lim_{s\rightarrow t}\frac{f^{\sigma
}\left( t\right) -f\left( s\right) }{\left[ \sigma \left( t\right) -s\right]
^{\alpha }}.
\end{equation*}
\end{remark}

\begin{remark}
A function $f:\mathbb{R}\rightarrow \mathbb{R}$ is symmetric
fractional differentiable of order $\alpha $ at point $t\in \mathbb{R}$ if,
and only if, the limit
\begin{equation*}
\lim_{h\rightarrow 0}\frac{f\left( t+h\right)
-f\left( t-h\right)}{2^{\alpha }h^{\alpha }}
\end{equation*}
exists as finite number. In this case,
\begin{equation*}
f^{\diamondsuit ^{\alpha }}\left( t\right)
=\lim_{h\rightarrow 0}\frac{f\left( t+h\right)
-f\left( t-h\right) }{2^{\alpha }h^{\alpha }}.
\end{equation*}
If $\alpha=1$, then we obtain the classical symmetric derivative
in the real numbers \cite{Thomson}, defined by
\begin{equation*}
f^{\diamondsuit }\left( t\right)
=\lim_{h\rightarrow 0}\frac{f\left( t+h\right) -f\left( t-h\right) }{h}.
\end{equation*}
\end{remark}

\begin{remark}
Let $h>0$. If a function $f:h\mathbb{Z}\rightarrow \mathbb{R}$
is symmetric differentiable of order $\alpha$ for $t\in h\mathbb{Z}$, then
\begin{equation*}
f^{\diamondsuit ^{\alpha }}\left( t\right) =\frac{f\left( t+h\right)
-f\left( t-h\right) }{2^{\alpha }h^{\alpha }}.
\end{equation*}
If $\alpha=1$, then we obtain the symmetric $h$-derivative
in the quantum set $h\mathbb{Z}$ \cite{Kac}, defined by
\begin{equation*}
f^{\diamondsuit }\left( t\right) =\frac{f\left( t+h\right)
-f\left( t-h\right) }{h}.
\end{equation*}
\end{remark}

Let us now see some examples.
\begin{proposition}
If $f:\mathbb{T}\rightarrow \mathbb{R}$ is defined by $f\left( t\right) =c$
for all $t\in \mathbb{T}$, $c\in \mathbb{R}$,
then $f^{\diamondsuit ^{\alpha }}\left( t\right)=0$
for any $t\in \mathbb{T}_{\kappa}^{\kappa }$.
\end{proposition}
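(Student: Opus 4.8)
The plan is to reduce the statement to Theorem~\ref{ts:propriedade} by splitting on the two alternatives that appear there, namely whether the point $t\in\mathbb{T}_{\kappa}^{\kappa}$ is dense or not. The constant function $f$ is continuous on all of $\mathbb{T}$, so the hypotheses of parts $(iii)$ and $(iv)$ of that theorem are available at every point, and the computation in each case collapses to $c-c=0$.

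First I would handle the case where $t$ is not dense. A non-dense point is right-scattered or left-scattered, so $\sigma(t)>\rho(t)$ and $\left[\sigma(t)-\rho(t)\right]^{\alpha}\neq 0$. Since $f$ is continuous at $t$, Theorem~\ref{ts:propriedade}~$(iii)$ applies directly and yields
\begin{equation*}
f^{\diamondsuit^{\alpha}}(t)=\frac{f^{\sigma}(t)-f^{\rho}(t)}{\left[\sigma(t)-\rho(t)\right]^{\alpha}}=\frac{c-c}{\left[\sigma(t)-\rho(t)\right]^{\alpha}}=0.
\end{equation*}

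Next I would handle the case where $t$ is dense. By Theorem~\ref{ts:propriedade}~$(iv)$, $f$ is symmetric fractional differentiable of order $\alpha$ at $t$ precisely when $\lim_{s\rightarrow t}\frac{f(2t-s)-f(s)}{2^{\alpha}(t-s)^{\alpha}}$ exists, and the value of the derivative is that limit; but for the constant function the difference quotient equals $\frac{c-c}{2^{\alpha}(t-s)^{\alpha}}=0$ for every $s\neq t$, so the limit exists and equals $0$, giving $f^{\diamondsuit^{\alpha}}(t)=0$. Combining the two cases proves $f^{\diamondsuit^{\alpha}}(t)=0$ for all $t\in\mathbb{T}_{\kappa}^{\kappa}$. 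There is no real obstacle here; the only points deserving a moment's attention are that the case split is exhaustive (every point is either dense or not dense) and that in the non-dense case one is entitled to divide by $\left[\sigma(t)-\rho(t)\right]^{\alpha}$, which is legitimate since such a $t$ is left- or right-scattered.
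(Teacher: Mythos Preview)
Your proof is correct and follows essentially the same idea as the paper's: in both cases the relevant numerator is $c-c=0$. The paper compresses the argument into a single line by invoking the limit characterization from Remark~\ref{rem:alt:def} directly rather than splitting into the dense/non-dense cases via Theorem~\ref{ts:propriedade}.
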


\begin{proof}
Trivially, we have
\begin{equation*}
f^{\diamondsuit ^{\alpha }}\left( t\right) =\lim_{s\rightarrow t}
\frac{f^{\sigma }\left( t\right) -f\left( s\right) +f\left( 2t-s\right)
-f^{\rho}\left( t\right) }{\left[ \sigma \left( t\right) +2t-2s-\rho \left( t\right)
\right]^{\alpha }}=0.
\end{equation*}
The proof is complete.
\end{proof}

\begin{proposition}
If $f:\mathbb{T}\rightarrow \mathbb{R}$ is defined by $f\left( t\right) =t$
for all $t\in \mathbb{T}$, then
\begin{equation*}
f^{\diamondsuit ^{\alpha }}\left( t\right)
=
\begin{cases}
\left[ \sigma \left( t\right) -\rho \left( t\right) \right] ^{1-\alpha }
& \text{ if } \alpha \neq 1\\
1 & \text{ if } \alpha =1
\end{cases}
\end{equation*}
for all $t\in \mathbb{T}_{\kappa }^{\kappa }$.
\end{proposition}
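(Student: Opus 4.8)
The plan is to prove the statement by splitting on whether the point $t\in\mathbb{T}_{\kappa}^{\kappa}$ is dense, and in each case to read off $f^{\diamondsuit^{\alpha}}(t)$ directly from Theorem~\ref{ts:propriedade}, using throughout that $f(t)=t$ is continuous on all of $\mathbb{T}$ (so the continuity hypotheses of parts $(iii)$ and $(v)$ are automatic) and that $f^{\sigma}(t)=\sigma(t)$, $f^{\rho}(t)=\rho(t)$.

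First I would treat the case in which $t$ is \emph{not} dense. Then $t$ is right-scattered or left-scattered, so either $\sigma(t)>t$ or $\rho(t)<t$; in either case $\sigma(t)-\rho(t)>0$, so division by $\left[\sigma(t)-\rho(t)\right]^{\alpha}$ is legitimate. Since $f$ is continuous at $t$ and $t$ is not dense, Theorem~\ref{ts:propriedade}$(iii)$ applies and yields
\[
f^{\diamondsuit^{\alpha}}(t)=\frac{f^{\sigma}(t)-f^{\rho}(t)}{\left[\sigma(t)-\rho(t)\right]^{\alpha}}=\frac{\sigma(t)-\rho(t)}{\left[\sigma(t)-\rho(t)\right]^{\alpha}}=\left[\sigma(t)-\rho(t)\right]^{1-\alpha},
\]
which equals $1$ when $\alpha=1$ because then the exponent is $0$ and the base is positive. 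This disposes of the non-dense case and matches both branches of the claimed formula.

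Next I would treat the case in which $t$ is dense, so $\sigma(t)=\rho(t)=t$. Here I invoke Theorem~\ref{ts:propriedade}$(iv)$ and compute the relevant limit:
\[
\lim_{h\rightarrow 0}\frac{f(t+h)-f(t-h)}{2^{\alpha}h^{\alpha}}=\lim_{h\rightarrow 0}\frac{(t+h)-(t-h)}{2^{\alpha}h^{\alpha}}=\lim_{h\rightarrow 0}2^{1-\alpha}h^{1-\alpha}.
\]
If $\alpha=1$ this limit is $2^{0}\cdot 1=1$, and if $\alpha\in\left]0,1\right[$ then $1-\alpha>0$ and the limit is $0$. In both cases the limit exists as a finite number, so by Theorem~\ref{ts:propriedade}$(iv)$ the function $f$ is symmetric fractional differentiable of order $\alpha$ at $t$ with $f^{\diamondsuit^{\alpha}}(t)$ equal to that limit; and this value agrees with $\left[\sigma(t)-\rho(t)\right]^{1-\alpha}$, since for $\alpha\in\left]0,1\right[$ one has $\left[\sigma(t)-\rho(t)\right]^{1-\alpha}=0^{1-\alpha}=0$, while for $\alpha=1$ the formula again gives $1$.

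I do not expect a genuine obstacle: once the correct clause of Theorem~\ref{ts:propriedade} is selected, each case is a one-line computation. The only points needing a moment's care are (a) verifying $\sigma(t)-\rho(t)\neq 0$ before dividing in the non-dense case — which is precisely the characterization of non-dense points — and (b) the bookkeeping of the convention $0^{1-\alpha}=0$ for $\alpha\in\left]0,1\right[$, which is what makes the two branches of the stated formula glue together consistently across dense and non-dense points.
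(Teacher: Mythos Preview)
Your proof is correct and follows essentially the same approach as the paper: split into the non-dense and dense cases and apply parts $(iii)$ and $(iv)$ of Theorem~\ref{ts:propriedade}, respectively. The only cosmetic difference is that in the dense case you use the $h$-form of the limit from Theorem~\ref{ts:propriedade}$(iv)$ whereas the paper uses the equivalent $s$-form, and you spell out a couple of verifications (positivity of $\sigma(t)-\rho(t)$, the convention $0^{1-\alpha}=0$) that the paper leaves implicit.
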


\begin{proof}
If $t$ is not dense, then by Theorem~\ref{ts:propriedade} (iii)
\begin{equation*}
f^{\diamondsuit ^{\alpha }}\left( t\right)
= \frac{f^{\sigma }\left(t\right)
-f^{\rho }\left( t\right) }{\left[ \sigma \left( t\right)
-\rho\left( t\right) \right] ^{\alpha }}
=\left[ \sigma \left( t\right) -\rho \left( t\right) \right] ^{1-\alpha }.
\end{equation*}
If $t$ is dense, then by Theorem~\ref{ts:propriedade} (iv)
\begin{equation*}
f^{\diamondsuit ^{\alpha }}\left( t\right)
=\lim_{s\rightarrow t}
\frac{f\left( 2t-s\right)-f\left( s\right)}{2^{\alpha }\left( t-s\right)^{\alpha }}
=\lim_{s\rightarrow t}\frac{2\left( t-s\right) }{2^{\alpha }\left(t-s\right)^{\alpha}}.
\end{equation*}
Thus, if $\alpha =1$, then $f^{\diamondsuit ^{\alpha }}\left( t\right) =1$;
if $0<\alpha <1$, then $f^{\diamondsuit ^{\alpha }}\left( t\right) =0$.
\end{proof}

Next proposition shows a function that is not differentiable
at point $t=0$ in the sense of classical (integer-order) calculus
and standard (nonsymmetric) calculus on time scales \cite{Bohner:1,Bohner:2},
but is symmetric fractional differentiable of order $\alpha \in \left] 0,1\right]$.

\begin{proposition}
\label{ex:mod}
Let $\mathbb{T}$ be a time scale with
$0\in \mathbb{T}_{\kappa }^{\kappa }$ and
$f:\mathbb{T}\rightarrow \mathbb{R}$ be defined by
$f\left( t\right) =\left\vert t\right\vert$. Then,
\begin{equation*}
f^{\diamondsuit^{\alpha }}(0)
=
\begin{cases}
0 & \text{ if } 0\text{ is dense}\\
\displaystyle\frac{\sigma (0)+\rho (0)}{\left[ \sigma (0)
-\rho (0)\right]^{\alpha }} & \text{ otherwise}
\end{cases}
\end{equation*}
for any $\alpha \in \left] 0,1\right]$.
\end{proposition}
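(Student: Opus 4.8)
The plan is to exploit the continuity of $f(t)=|t|$ everywhere on $\mathbb{R}$, in particular at $t=0$, and to split into the two cases that the statement already isolates, according to whether the point $0$ is dense. Since $0\in\mathbb{T}_{\kappa}^{\kappa}$ by hypothesis, $f^{\diamondsuit^{\alpha}}(0)$ is eligible to exist and Theorem~\ref{ts:propriedade} applies directly; the whole argument is then an application of parts $(iii)$ and $(iv)$ of that theorem, with no new limit to estimate from scratch.

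First I would treat the dense case, where $\sigma(0)=\rho(0)=0$. Here I invoke Theorem~\ref{ts:propriedade}$(iv)$: $f^{\diamondsuit^{\alpha}}(0)$ exists if and only if $\lim_{s\rightarrow 0}\frac{f(2\cdot 0-s)-f(s)}{2^{\alpha}(0-s)^{\alpha}}$ exists, and equals that limit. Since $f(2\cdot 0-s)=|-s|=|s|=f(s)$, the numerator is identically zero, so the limit is $0$; hence $f^{\diamondsuit^{\alpha}}(0)=0$, which is the first branch. (Equivalently, one may substitute $t=0$ into the defining inequality of the symmetric fractional derivative directly: all the $f$-terms cancel by $f(-s)=f(s)$, $\sigma(0)=\rho(0)=0$, leaving $|L|\,|2s|^{\alpha}\le\varepsilon|2s|^{\alpha}$, which forces $L=0$.)

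Next I would treat the non-dense case, i.e.\ $0$ is right-scattered or left-scattered. Because $f$ is continuous at $0$, Theorem~\ref{ts:propriedade}$(iii)$ gives $f^{\diamondsuit^{\alpha}}(0)=\dfrac{f^{\sigma}(0)-f^{\rho}(0)}{[\sigma(0)-\rho(0)]^{\alpha}}=\dfrac{|\sigma(0)|-|\rho(0)|}{[\sigma(0)-\rho(0)]^{\alpha}}$. The key elementary observation is that $\sigma(0)=\inf\{s\in\mathbb{T}:s>0\}\ge 0$ and $\rho(0)=\sup\{s\in\mathbb{T}:s<0\}\le 0$, so $|\sigma(0)|=\sigma(0)$ and $|\rho(0)|=-\rho(0)$; therefore $|\sigma(0)|-|\rho(0)|=\sigma(0)+\rho(0)$ and the expression collapses to $\dfrac{\sigma(0)+\rho(0)}{[\sigma(0)-\rho(0)]^{\alpha}}$, the second branch.

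The only point requiring a moment's care is the sign bookkeeping for $\sigma(0)$ and $\rho(0)$ together with the remark that these jump operators return genuine finite values at $0$ on the relevant side when $0$ is scattered there; but this is immediate from $0\in\mathbb{T}$ and the definitions of $\sigma$, $\rho$ (if $0$ were, say, right-scattered with no point of $\mathbb{T}$ above it, then $\sigma(0)=\sup\mathbb{T}=0$, contradicting right-scatteredness). So there is no real obstacle: once the dense/non-dense dichotomy is in place, both branches follow from Theorem~\ref{ts:propriedade} and the trivial identity $|-s|=|s|$.
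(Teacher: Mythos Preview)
Your proof is correct and follows essentially the same reasoning as the paper's: both rest on the cancellation $|-s|=|s|$ and on the sign observations $\sigma(0)\ge 0$, $\rho(0)\le 0$. The only organizational difference is that the paper invokes the single limit formula of Remark~\ref{rem:alt:def}, in which the $f(h)$ and $f(-h)$ terms cancel to leave the constant numerator $\sigma(0)+\rho(0)$, and then reads off both cases from that one expression, whereas you split explicitly via parts $(iii)$ and $(iv)$ of Theorem~\ref{ts:propriedade}.
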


\begin{proof}
We know (see Remark~\ref{rem:alt:def}) that
\begin{equation*}
f^{\diamondsuit }\left( 0\right) =\lim_{h\rightarrow 0}\frac{f^{\sigma
}\left( 0\right) -f\left( 0+h\right) +f\left( 0-h\right) -f^{\rho }\left(
0\right) }{\left[ \sigma \left( 0\right) -2h-\rho \left( 0\right) \right]
^{\alpha }}=\lim_{h\rightarrow 0}\frac{\sigma \left( 0\right) +\rho \left(
0\right) }{\left[ \sigma \left( 0\right) -2h-\rho \left( 0\right) \right]
^{\alpha }}.
\end{equation*}
The result follows immediately from this equality.
\end{proof}

We now give some algebric properties of the symmetric fractional derivative.

\begin{theorem}
\label{propriedades}
Let $f,g:\mathbb{T}\rightarrow \mathbb{R}$ be two symmetric fractional
differentiable functions of order $\alpha$ at
$t\in \mathbb{T}_{\kappa}^{\kappa }$
and let $\lambda \in \mathbb{R}$. The following holds:
\begin{enumerate}
\item[(i)] Function $f+g$ is symmetric fractional
differentiable of order $\alpha$ at $t$ with
\begin{equation*}
\left( f+g\right) ^{\diamondsuit ^{\alpha }}\left( t\right)
=f^{\diamondsuit^{\alpha }}\left( t\right)
+g^{\diamondsuit ^{\alpha }}\left( t\right).
\end{equation*}

\item[(ii)] Function $\lambda f$ is symmetric fractional differentiable
of order $\alpha$ at $t$ with
\begin{equation*}
\left( \lambda f\right) ^{\diamondsuit ^{\alpha }}\left( t\right)
=\lambda f^{\diamondsuit ^{\alpha }}\left( t\right).
\end{equation*}

\item[(iii)] If $f$ and $g$ are continuous at $t$, then $fg$ is symmetric
fractional differentiable of order $\alpha$ at $t$ with
\begin{equation*}
\left( fg\right) ^{\diamondsuit ^{\alpha }}\left( t\right) =f^{\diamondsuit
^{\alpha }}\left( t\right) g^{\sigma }\left( t\right) +f^{\rho }\left(
t\right) g^{\diamondsuit ^{\alpha }}\left( t\right) .
\end{equation*}

\item[(iv)] If $f$ is continuous at $t$ and $f^{\sigma }\left( t\right)
f^{\rho }\left( t\right) \neq 0$, then $1/f$ is symmetric fractional
differentiable of order $\alpha$ at $t$ with
\begin{equation*}
\left( \frac{1}{f}\right)^{\diamondsuit ^{\alpha }}\left( t\right)
=-\frac{f^{\diamondsuit^{\alpha }}\left( t\right) }{f^{\sigma }\left( t\right)
f^{\rho }\left( t\right)}.
\end{equation*}

\item[(v)] If $f$ and $g$ are continuous at $t$ and $g^{\sigma }\left(
t\right) g^{\rho }\left( t\right) \neq 0$, then $f/g$ is symmetric
fractional differentiable of order $\alpha$ at $t$ with
\begin{equation*}
\left( \frac{f}{g}\right) ^{\diamondsuit ^{\alpha }}\left( t\right)
=\frac{f^{\diamondsuit ^{\alpha }}\left( t\right) g^{\rho }\left( t\right)
-f^{\rho}\left( t\right) g^{\diamondsuit^{\alpha }}\left(
t\right)}{g^{\sigma}\left( t\right) g^{\rho }\left( t\right)}.
\end{equation*}
\end{enumerate}
\end{theorem}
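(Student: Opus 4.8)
The plan is to prove each of the five properties by reducing to the alternative limit characterization of the symmetric fractional derivative given in Remark~\ref{rem:alt:def}, together with the structural results of Theorem~\ref{ts:propriedade}, exactly as was done for the nabla case in Theorem~\ref{T3}. Parts $(i)$ and $(ii)$ I would prove directly from the definition: given $\varepsilon>0$, choose neighborhoods $U_{1}$ of $t$ for $f$ and $U_{2}$ of $t$ for $g$ with the defining inequalities holding with $\varepsilon/2$ (resp. with $|\lambda|$ absorbed), intersect them, and apply the triangle inequality to the bracketed expression for $f+g$ (resp. pull the constant $\lambda$ out of both bracket and $[\sigma(t)+2t-2s-\rho(t)]^{\alpha}$ factor); the symmetry condition $2t-s\in U$ is preserved under intersection, so no new subtlety arises here.

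For parts $(iii)$--$(v)$ I would split into the two cases $t$ dense and $t$ not dense, mirroring the proof of Theorem~\ref{T3}$(iii)$. When $t$ is not dense, Theorem~\ref{ts:propriedade}$(iii)$ gives the closed form $f^{\diamondsuit^{\alpha}}(t)=\bigl(f^{\sigma}(t)-f^{\rho}(t)\bigr)/[\sigma(t)-\rho(t)]^{\alpha}$, so one just computes: for the product, write
\begin{equation*}
\frac{(fg)^{\sigma}(t)-(fg)^{\rho}(t)}{[\sigma(t)-\rho(t)]^{\alpha}}
=\frac{f^{\sigma}(t)-f^{\rho}(t)}{[\sigma(t)-\rho(t)]^{\alpha}}\,g^{\sigma}(t)
+f^{\rho}(t)\,\frac{g^{\sigma}(t)-g^{\rho}(t)}{[\sigma(t)-\rho(t)]^{\alpha}},
\end{equation*}
which is exactly $f^{\diamondsuit^{\alpha}}(t)g^{\sigma}(t)+f^{\rho}(t)g^{\diamondsuit^{\alpha}}(t)$; note the asymmetry between $g^{\sigma}$ and $f^{\rho}$ comes from the choice $(fg)^{\sigma}=f^{\sigma}g^{\sigma}$, $(fg)^{\rho}=f^{\rho}g^{\rho}$ and regrouping. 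When $t$ is dense, Theorem~\ref{ts:propriedade}$(iv)$ gives $f^{\diamondsuit^{\alpha}}(t)=\lim_{h\to0}\bigl(f(t+h)-f(t-h)\bigr)/(2^{\alpha}h^{\alpha})$, and also $\sigma(t)=\rho(t)=t$; I would then split the difference quotient $\bigl(f(t+h)g(t+h)-f(t-h)g(t-h)\bigr)/(2^{\alpha}h^{\alpha})$ by adding and subtracting $f(t-h)g(t+h)$, using continuity of $f$ and $g$ at $t$ to pass to the limit, obtaining $f^{\diamondsuit^{\alpha}}(t)g(t)+f(t)g^{\diamondsuit^{\alpha}}(t)$, which agrees with the stated formula since $g^{\sigma}(t)=g(t)=g^{\rho}(t)$ and $f^{\rho}(t)=f(t)$ at a dense point. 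Part $(iv)$ then follows by the same trick as in Theorem~\ref{T3}$(iv)$: apply the product rule $(iii)$ to $f\cdot\frac{1}{f}\equiv1$ (whose symmetric fractional derivative is zero by the preceding proposition on constants), solve for $(1/f)^{\diamondsuit^{\alpha}}(t)$ using $f^{\sigma}(t)f^{\rho}(t)\neq0$, and in the dense case note $f^{\sigma}(t)f^{\rho}(t)=f(t)^{2}$. Part $(v)$ follows immediately by writing $f/g=f\cdot(1/g)$ and combining $(iii)$ and $(iv)$.

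The main obstacle I anticipate is the product rule in the dense case: one must be careful that the naive splitting produces terms like $\bigl(f(t+h)-f(t-h)\bigr)g(t+h)/(2^{\alpha}h^{\alpha})$ and $f(t-h)\bigl(g(t+h)-g(t-h)\bigr)/(2^{\alpha}h^{\alpha})$, and to take the limit of the first one needs $g(t+h)\to g(t)$ (continuity), while for the existence of the limit one tacitly uses that $f^{\diamondsuit^{\alpha}}(t)$ and $g^{\diamondsuit^{\alpha}}(t)$ exist as finite numbers — so the product of a convergent sequence with a bounded one converges. A secondary subtlety is purely notational bookkeeping of $\sigma$, $\rho$, and the substitution $s=2t-s$ invariance of the neighborhoods; I would handle this by consistently working with the $h$-form from Remark~\ref{rem:alt:def} in the dense case and with the closed form from Theorem~\ref{ts:propriedade}$(iii)$ otherwise, so that no $\varepsilon$-$\delta$ juggling beyond parts $(i)$--$(ii)$ is actually needed.
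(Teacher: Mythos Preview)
Your proposal is correct and follows essentially the same route as the paper: the case split dense/not dense for $(iii)$, the algebraic regrouping of $(fg)^{\sigma}-(fg)^{\rho}$, the $h$-form splitting in the dense case, and the derivation of $(iv)$ and $(v)$ from $(iii)$ via $f\cdot(1/f)\equiv 1$ and $f/g=f\cdot(1/g)$ are all exactly what the paper does. The only cosmetic difference is that for $(i)$ and $(ii)$ you work from the $\varepsilon$--$\delta$ definition with the triangle inequality (as in Theorem~\ref{T3}), whereas the paper simply invokes the limit form of Remark~\ref{rem:alt:def} and splits the quotient; both are equally valid and neither buys anything over the other.
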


\begin{proof}
$(i)$ For $t\in \mathbb{T}_{\kappa }^{\kappa }$ we have
\begin{equation*}
\begin{split}
\left( f+g\right)^{\diamondsuit ^{\alpha }}\left( t\right)
&=\lim_{s\rightarrow t}\frac{\left( f+g\right) ^{\sigma }\left( t\right)
-\left( f+g\right) \left( s\right) +\left( f+g\right) \left( 2t-s\right)
-\left( f+g\right) ^{\rho }\left( t\right) }{\left[ \sigma \left( t\right)
+2t-2s-\rho \left( t\right) \right] ^{\alpha }} \\
&=\lim_{s\rightarrow t}\frac{f^{\sigma }\left( t\right) -f\left( s\right)
+f\left( 2t-s\right) -f^{\rho }\left( t\right) }{\left[ \sigma \left(
t\right) +2t-2s-\rho \left( t\right) \right] ^{\alpha }}\\
&\quad +\lim_{s\rightarrow t}\frac{g^{\sigma }\left( t\right)
-g\left( s\right) +g\left( 2t-s\right)
-g^{\rho }\left( t\right) }{\left[ \sigma \left( t\right)
+2t-2s-\rho \left(t\right) \right] ^{\alpha }} \\
& =f^{\diamondsuit ^{\alpha }}\left( t\right)
+g^{\diamondsuit ^{\alpha}}\left( t\right).
\end{split}
\end{equation*}
$(ii)$ Let $t\in \mathbb{T}_{\kappa }^{\kappa }$ and $\lambda \in
\mathbb{R}$. Then,
\begin{equation*}
\begin{split}
\left( \lambda f\right) ^{\diamondsuit ^{\alpha }}\left( t\right) &
=\lim_{s\rightarrow t}\frac{\left( \lambda f\right) ^{\sigma }\left(
t\right) -\left( \lambda f\right) \left( s\right) +\left( \lambda f\right)
\left( 2t-s\right) -\left( \lambda f\right) ^{\rho }\left( t\right) }{\left[
\sigma \left( t\right) +2t-2s-\rho \left( t\right) \right] ^{\alpha }} \\
& =\lambda \lim_{s\rightarrow t}\frac{f^{\sigma }\left( t\right) -f\left(
s\right) +f\left( 2t-s\right) -f^{\rho }\left( t\right) }{\left[ \sigma
\left( t\right) +2t-2s-\rho \left( t\right) \right] ^{\alpha }}\\
&=\lambda f^{\diamondsuit ^{\alpha }}\left( t\right) .
\end{split}
\end{equation*}
$(iii)$ Let us assume that $t\in \mathbb{T}_{\kappa }^{\kappa }$ and
$f$ and $g$ are continuous at $t$. If $t$ is dense, then
\begin{equation*}
\begin{split}
\left( fg\right) ^{\diamondsuit ^{\alpha }}\left( t\right) &
=\lim_{h\rightarrow 0}\frac{\left( fg\right) \left( t+h\right) -\left(
fg\right) \left( t-h\right) }{2^{\alpha }h^{\alpha }} \\
& =\lim_{h\rightarrow 0}\frac{f\left( t+h\right)
-f\left(t-h\right)}{2^{\alpha }h^{\alpha }}g\left( t+h\right)
+\lim_{h\rightarrow 0}\frac{g\left( t+h\right)
-g\left( t-h\right) }{2^{\alpha }h^{\alpha }}f\left(t-h\right) \\
& =f^{\diamondsuit ^{\alpha }}\left( t\right) g^{\sigma }\left( t\right)
+f^{\rho }\left( t\right) g^{\diamondsuit ^{\alpha }}\left( t\right) .
\end{split}
\end{equation*}
If $t$ is not dense, then
\begin{equation*}
\begin{split}
\left( fg\right) ^{\diamondsuit ^{\alpha }}\left( t\right) & =\frac{\left(
fg\right) ^{\sigma }\left( t\right) -\left( fg\right) ^{\rho }\left(
t\right) }{\left[ \sigma \left( t\right) -\rho \left( t\right) \right]^{\alpha}}
=\frac{f^{\sigma }\left( t\right)
-f^{\rho }\left( t\right) }{\left[ \sigma \left( t\right)
-\rho \left( t\right) \right]^{\alpha}}g^{\sigma }\left( t\right)
+\frac{g^{\sigma }\left( t\right) -g^{\rho}\left( t\right) }{
\left[ \sigma \left( t\right)
-\rho \left( t\right)\right]^{\alpha }}f^{\rho }\left( t\right) \\
& =f^{\diamondsuit ^{\alpha }}\left( t\right) g^{\sigma }\left( t\right)
+f^{\rho }\left( t\right) g^{\diamondsuit ^{\alpha }}\left( t\right).
\end{split}
\end{equation*}
We just proved the intended equality.
$(iv)$ Using the relation
$\left( \frac{1}{f}\times f\right) \left( t\right) =1$
we can write that
\begin{equation*}
0=\left( \frac{1}{f}\times f\right) ^{\diamondsuit ^{\alpha }}\left(
t\right)=f^{\diamondsuit ^{\alpha }}\left( t\right) \left( \frac{1}{f}
\right)^{\sigma }\left( t\right) +f^{\rho }\left( t\right)
\left( \frac{1}{f}\right) ^{\diamondsuit ^{\alpha }}\left(t\right).
\end{equation*}
Therefore,
\begin{equation*}
\left(\frac{1}{f}\right)^{\diamondsuit^{\alpha }}\left( t\right)
=-\frac{f^{\diamondsuit ^{\alpha }}\left(t\right)}{f^{\sigma }\left(t\right)
f^{\rho }\left( t\right)}.
\end{equation*}
$(v)$ Let $t\in \mathbb{T}_{\kappa }^{\kappa }$. Then,
\begin{equation*}
\begin{split}
\left( \frac{f}{g}\right) ^{\diamondsuit ^{\alpha }}\left( t\right) &
=\left( f\times \frac{1}{g}\right) ^{\diamondsuit ^{\alpha }}\left( t\right)
=f^{\diamondsuit ^{\alpha }}\left( t\right) \left( \frac{1}{g}\right)^{\sigma }\left( t\right)
+f^{\rho }\left( t\right) \left( \frac{1}{g}\right)^{\diamondsuit ^{\alpha }}\left( t\right)\\
& =\frac{f^{\diamondsuit ^{\alpha }}\left( t\right) }{g^{\sigma }\left(
t\right) }+f^{\rho }\left( t\right) \left( -\frac{g^{\diamondsuit ^{\alpha
}}\left( t\right) }{g^{\sigma }\left( t\right) g^{\rho }\left( t\right)}
\right) =\frac{f^{\diamondsuit ^{\alpha }}\left( t\right) g^{\rho }\left(
t\right) -f^{\rho }\left( t\right) g^{\diamondsuit ^{\alpha }}\left(
t\right) }{g^{\sigma }\left( t\right) g^{\rho }\left( t\right) }.
\end{split}
\end{equation*}
The proof is complete.
\end{proof}

\begin{example}
The symmetric fractional derivative of $f\left( t\right) =t^{2}$ of order $\alpha$ is
\begin{equation*}
f^{\diamondsuit ^{\alpha }}\left( t\right) =\left\{
\begin{array}{ccc}
\left[ \sigma \left( t\right) -\rho \left( t\right) \right] ^{1-\alpha }
\left[ \sigma \left( t\right) +\rho \left( t\right) \right] & \text{ if } &
\alpha \neq 1 \\
\sigma \left( t\right) +\rho \left( t\right) & \text{ if } & \alpha =1.
\end{array}
\right.
\end{equation*}
\end{example}

\begin{example}
The symmetric derivative of $f\left( t\right) =1/t$ of order $\alpha$ is
\begin{equation*}
f^{\diamondsuit ^{\alpha }}\left( t\right) =\left\{
\begin{array}{ccc}
\displaystyle-\frac{\left[ \sigma \left( t\right) -\rho \left( t\right)
\right] ^{1-\alpha }}{\sigma \left( t\right) \rho \left( t\right) } & \text{
if } & \alpha \neq 1 \\[0.3cm]
\displaystyle-\frac{1}{\sigma \left( t\right) \rho \left( t\right) } & \text{
if } & \alpha =1.
\end{array}
\right.
\end{equation*}
\end{example}

The next result gives a relation between the nonsymmetric
and symmetric fractional derivatives.

\begin{proposition}
\label{delta:nabla}
If $f$ is both delta and nabla fractional differentiable
of order $\alpha$, then $f$ is symmetric fractional
differentiable of order $\alpha$ with
\begin{equation*}
f^{\diamondsuit ^{\alpha }}\left( t\right) =\gamma _{1}\left( t\right)
f^{\Delta ^{\alpha }}\left( t\right) +\gamma _{2}\left( t\right) f^{\nabla
^{\alpha }}\left( t\right)
\end{equation*}
for each $t\in \mathbb{T}_{\kappa }^{\kappa }$, where
\begin{equation*}
\gamma _{1}\left( t\right) :=\lim_{s\rightarrow t}\left[ \frac{\sigma \left(
t\right) -s}{\sigma \left( t\right) +2t-2s-\rho \left( t\right) }\right]
^{\alpha }
\end{equation*}
and
\begin{equation*}
\gamma _{2}\left( t\right) :=\lim_{s\rightarrow t}\left[ \frac{\left(
2t-s\right) -\rho \left( t\right) }{\sigma \left( t\right) +2t-2s-\rho
\left( t\right) }\right] ^{\alpha }.
\end{equation*}
\end{proposition}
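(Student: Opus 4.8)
The plan is to reduce everything to the three limit characterizations gathered in Remark~\ref{rem:alt:def}. Fix $t\in\mathbb{T}_{\kappa}^{\kappa}$ and abbreviate $D(s):=\sigma(t)+2t-2s-\rho(t)$. The starting point is the algebraic identity
\[
f^{\sigma}(t)-f(s)+f(2t-s)-f^{\rho}(t)=\bigl[f^{\sigma}(t)-f(s)\bigr]+\bigl[f(2t-s)-f^{\rho}(t)\bigr]
\]
together with the corresponding split of the denominator, $D(s)=\bigl[\sigma(t)-s\bigr]+\bigl[(2t-s)-\rho(t)\bigr]$. Dividing by $D(s)^{\alpha}$ and inserting, in each of the two pieces, the matching power in numerator and denominator, one obtains
\begin{multline*}
\frac{f^{\sigma}(t)-f(s)+f(2t-s)-f^{\rho}(t)}{D(s)^{\alpha}}\\
=\frac{f^{\sigma}(t)-f(s)}{[\sigma(t)-s]^{\alpha}}\left[\frac{\sigma(t)-s}{D(s)}\right]^{\alpha}
+\frac{f(2t-s)-f^{\rho}(t)}{[(2t-s)-\rho(t)]^{\alpha}}\left[\frac{(2t-s)-\rho(t)}{D(s)}\right]^{\alpha}.
\end{multline*}

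Next I would compute the limit of each of the four factors as $s\to t$. By the delta characterization in Remark~\ref{rem:alt:def}, the first quotient converges to $f^{\Delta^{\alpha}}(t)$. For the third quotient I would use the substitution $u=2t-s$; since $u\to t$ as $s\to t$ and $(2t-s)-\rho(t)=u-\rho(t)$, that quotient equals $\bigl(f(u)-f^{\rho}(t)\bigr)/[u-\rho(t)]^{\alpha}$ and hence converges to $f^{\nabla^{\alpha}}(t)$ by the nabla characterization. The two remaining weight factors are precisely the expressions defining $\gamma_{1}(t)$ and $\gamma_{2}(t)$, and one checks directly that these limits exist: if $t$ is dense then $\sigma(t)=\rho(t)=t$, so $D(s)=2(t-s)$ and each weight is the constant $(1/2)^{\alpha}$; if $t$ is not dense then $D(s)\to\sigma(t)-\rho(t)>0$, so the weights converge to $\bigl([\sigma(t)-t]/[\sigma(t)-\rho(t)]\bigr)^{\alpha}$ and $\bigl([t-\rho(t)]/[\sigma(t)-\rho(t)]\bigr)^{\alpha}$, respectively.

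With all four factors convergent, the limit laws applied to the displayed identity give that its right-hand side tends to $\gamma_{1}(t)f^{\Delta^{\alpha}}(t)+\gamma_{2}(t)f^{\nabla^{\alpha}}(t)$; therefore the left-hand side has the same limit, which by Remark~\ref{rem:alt:def} shows that $f^{\diamondsuit^{\alpha}}(t)$ exists and equals this value, as claimed. The one point I expect to require genuine care is the degenerate case $t$ dense, where $D(s)\to0$: there each of the two difference quotients is itself an indeterminate $0/0$ expression, so one cannot merely pass to the limit termwise until one has invoked the delta and nabla fractional differentiability of $f$ to guarantee that those quotients converge; only then may one multiply by the (now constant) weights. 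One should also, throughout, keep the approach $s\to t$ two-sided or one-sided according to whether $\alpha$ is the reciprocal of an odd number, exactly as in the definitions, so that the $\alpha$-th powers are real and the characterizations of Remark~\ref{rem:alt:def} are legitimately applicable; apart from this bookkeeping the argument is a routine application of the algebra of limits.
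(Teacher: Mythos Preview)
Your proposal is correct and follows essentially the same route as the paper: split the symmetric difference quotient into a delta piece and a nabla piece, insert the matching powers $[\sigma(t)-s]^{\alpha}$ and $[(2t-s)-\rho(t)]^{\alpha}$, invoke Remark~\ref{rem:alt:def} for the two difference quotients, and verify separately in the dense and non-dense cases that the weight limits $\gamma_{1}(t),\gamma_{2}(t)$ exist. Your write-up is in fact slightly more careful than the paper's own proof---you make the substitution $u=2t-s$ explicit for the nabla piece and flag the one-sided versus two-sided neighborhood bookkeeping---but the underlying argument is the same.
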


\begin{proof}
Note that
\begin{equation*}
\begin{split}
f^{\diamondsuit ^{\alpha }}\left( t\right)
&=\lim_{s\rightarrow t}\frac{f^{\sigma }\left( t\right)
-f\left( s\right) +f\left( 2t-s\right)
-f^{\rho}\left( t\right) }{\left[ \sigma \left( t\right)
+2t-2s-\rho \left( t\right)\right] ^{\alpha }} \\
&=\lim_{s\rightarrow t}\Biggl(\frac{\left[ \sigma \left( t\right)
-s\right]^{\alpha }}{\left[ \sigma \left( t\right) +2t-2s
-\rho \left( t\right) \right]^{\alpha }}\frac{f^{\sigma }\left( t\right)
-f\left( s\right) }{\left[\sigma \left( t\right) -s\right] ^{\alpha }} \\
&\qquad +\frac{\left[ \left( 2t-s\right) -\rho \left( t\right) \right] ^{\alpha }}{
\left[ \sigma \left( t\right) +2t-2s-\rho \left( t\right) \right] ^{\alpha }}
\frac{f\left( 2t-s\right) -f^{\rho }\left( t\right) }{\left[ \left(
2t-s\right) -\rho \left( t\right) \right] ^{\alpha }}\Biggr) \\
&=\lim_{s\rightarrow t}\Biggl( \left[ \frac{\sigma \left( t\right) -s}{
\sigma \left( t\right) +2t-2s-\rho \left( t\right) }\right] ^{\alpha
}f^{\Delta }\left( t\right) \\
&\qquad +\left[ \frac{\left( 2t-s\right) -\rho \left(
t\right) }{\sigma \left( t\right) +2t-2s-\rho \left( t\right) }\right]
^{\alpha }f^{\nabla }\left( t\right) \Biggr).
\end{split}
\end{equation*}
If $t\in \mathbb{T}$ is dense, then
\begin{equation*}
\gamma_{1}\left( t\right) =\lim_{s\rightarrow t}\left[ \frac{\sigma
\left( t\right) -s}{\sigma \left( t\right) +2t-2s-\rho \left( t\right)}
\right]^{\alpha }
=\lim_{s\rightarrow t}\left[ \frac{t-s}{2t-2s}\right] ^{\alpha }\\
=\frac{1}{2^{\alpha }}
\end{equation*}
and
\begin{equation*}
\gamma _{2}\left( t\right) =\lim_{s\rightarrow t}\left[ \frac{\left(
2t-s\right) -\rho \left( t\right) }{\sigma \left( t\right) +2t-2s-\rho
\left( t\right) }\right]^{\alpha }
=\lim_{s\rightarrow t}\left[ \frac{t-s}{2t-2s}\right] ^{\alpha } \\
=\frac{1}{2^{\alpha }}.
\end{equation*}
On the other hand, if $t\in \mathbb{T}$ is not dense, then
\begin{equation*}
\gamma _{1}\left( t\right)
=\lim_{s\rightarrow t}\left[ \frac{\sigma
\left( t\right) -s}{\sigma \left( t\right) +2t-2s-\rho \left( t\right)}
\right]^{\alpha}
=\left[ \frac{\sigma \left( t\right) -t}{\sigma \left( t\right) -\rho
\left( t\right) }\right] ^{\alpha }
\end{equation*}
and
\begin{equation*}
\gamma _{2}\left( t\right) =\lim_{s\rightarrow t}\left[ \frac{\left(
2t-s\right) -\rho \left( t\right) }{\sigma \left( t\right) +2t-2s-\rho
\left( t\right) }\right] ^{\alpha}
=\lim_{s\rightarrow t}\left[ \frac{t-\rho \left( t\right) }{\sigma \left(
t\right) -\rho \left( t\right) }\right] ^{\alpha }.
\end{equation*}
Hence, functions $\gamma _{1},\, \gamma _{2}:\mathbb{T}\rightarrow
\mathbb{R}$ are well defined and, if $f$ is delta and nabla differentiable, then
$f^{\diamondsuit }\left( t\right) =\gamma _{1}\left( t\right) f^{\Delta
}\left( t\right) +\gamma _{2}\left( t\right) f^{\nabla }\left( t\right)$.
\end{proof}

\begin{remark}
Suppose that $f$ is delta and nabla fractional differentiable of order $\alpha$.
If point $t\in \mathbb{T}_{\kappa }^{\kappa }$ is right-scattered and left-dense,
then its fractional symmetric derivative of order $\alpha$ is equal
to its delta fractional derivative of order $\alpha$.
If $t$ is left-scattered and right-dense, then its symmetric fractional derivative
of order $\alpha$ is equal to its nabla fractional derivative of order $\alpha$.
\end{remark}

Due to Proposition~\ref{delta:nabla}, we can now define
a symmetric integral of noninteger order.

\begin{definition}[The symmetric fractional integral]
\label{def:intFracCauchy sym}
Assume function $f:\mathbb{T}\rightarrow \mathbb{R}$ is simultaneously
rd- and ld-continuous. Let $a,b\in \mathbb{T}$ and
$F^{\Delta^{\beta }}(t) =\int f(t)\Delta^{\beta}t$ and
$F^{\nabla^{\beta}}(t) =\int f(t)\nabla^{\beta }t$
denote the indefinite delta and nabla fractional integrals
of $f$ of order $\beta$, respectively. Then we define the Cauchy
symmetric fractional integral of $f$ of order $\beta \in ]0,1]$ by
\begin{equation*}
\begin{split}
\int_{a}^{b}f(t)\diamondsuit ^{\beta }t
&=\left. \gamma _{1}\left(t\right)
F^{\Delta ^{\beta }}(t)\right\vert _{a}^{b}+\left.
\gamma_{2}\left( t\right) F^{\nabla ^{\beta }}(t)\right\vert _{a}^{b}\\
&=\gamma _{1}\left( b\right) F^{\Delta ^{\beta }}(b)-\gamma _{1}\left(
a\right) F^{\Delta ^{\beta }}(a)+\gamma _{2}\left( b\right) F^{\nabla
^{\beta }}(b)-\gamma _{2}\left( a\right) F^{\nabla ^{\beta }}(a).
\end{split}
\end{equation*}
\end{definition}

Finally, we present some algebraic properties
of the symmetric fractional integral.

\begin{theorem}
\label{int:sym:props}
Let $a,b,c\in \mathbb{T}$ and $\lambda \in \mathbb{R}$.
If $f,g\in \mathcal{C}_{ld}$ and $f,g\in \mathcal{C}_{rd}$
with $0 \leq \beta \leq 1$, then
\begin{enumerate}
\item[(i)] $\displaystyle\int_{a}^{b}[f(t)+g(t)]\diamondsuit ^{\beta}t
=\int_{a}^{b}f(t)\diamondsuit ^{\beta }t+\int_{a}^{b}g(t)\diamondsuit^{\beta }t$;

\item[(ii)] $\displaystyle\int_{a}^{b}(\lambda f)(t)\diamondsuit^{\beta}t
=\lambda \int_{a}^{b}f(t)\diamondsuit ^{\beta }t$;

\item[(iii)] $\displaystyle\int_{a}^{b}f(t)\diamondsuit^{\beta}t
=-\int_{b}^{a}f(t)\diamondsuit ^{\beta }t$;

\item[(iv)] $\displaystyle\int_{a}^{b}f(t)\diamondsuit^{\beta}t
=\int_{a}^{c}f(t)\diamondsuit ^{\beta }t
+\int_{c}^{b}f(t)\diamondsuit^{\beta }t$;

\item[(v)] $\displaystyle\int_{a}^{a}f(t)\diamondsuit ^{\beta }t=0$.
\end{enumerate}
\end{theorem}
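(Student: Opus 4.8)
The plan is to reduce each of the five assertions to the defining identity of Definition~\ref{def:intFracCauchy sym},
\begin{equation*}
\int_{a}^{b}f(t)\diamondsuit ^{\beta }t=\gamma _{1}(b)F^{\Delta ^{\beta }}(b)-\gamma _{1}(a)F^{\Delta ^{\beta }}(a)+\gamma _{2}(b)F^{\nabla ^{\beta }}(b)-\gamma _{2}(a)F^{\nabla ^{\beta }}(a),
\end{equation*}
combined with the corresponding properties of the indefinite and Cauchy \emph{nabla} fractional integrals (Theorem~\ref{T4}) and their \emph{delta} counterparts. The assumption $f,g\in\mathcal{C}_{ld}\cap\mathcal{C}_{rd}$ ensures that the indefinite integrals $F^{\Delta^{\beta}}=\int f\,\Delta^{\beta}t$, $F^{\nabla^{\beta}}=\int f\,\nabla^{\beta}t$ (and the analogues for $g$, for $f+g$ and for $\lambda f$) all exist, while Proposition~\ref{delta:nabla} shows that $\gamma_{1},\gamma_{2}$ are well-defined real-valued functions depending on $t$ alone; in particular they are independent of the integrand and therefore pass through every linear operation applied to it.

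For $(i)$ and $(ii)$ I would first record that the indefinite delta and nabla fractional integrals are linear. Indeed $\int[f(t)+g(t)]\nabla t=\int f(t)\nabla t+\int g(t)\nabla t$ and $\int(\lambda f)(t)\nabla t=\lambda\int f(t)\nabla t$ modulo an additive constant, and applying $(\cdot)^{\nabla^{(1-\beta)}}$ both respects linearity (Theorem~\ref{T3}$(i),(ii)$) and annihilates that constant (Proposition~\ref{E1:i}); the delta side is handled by the stated delta analogues of these results. Consequently $(F+G)^{\nabla^{\beta}}=F^{\nabla^{\beta}}+G^{\nabla^{\beta}}$, $(\lambda F)^{\nabla^{\beta}}=\lambda F^{\nabla^{\beta}}$, and likewise with $\nabla$ replaced by $\Delta$. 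Substituting these into the four-term formula above and regrouping the boundary contributions (the common factors $\gamma_{1},\gamma_{2}$ distributing over the sum and the scalar) gives
\begin{equation*}
\int_{a}^{b}[f(t)+g(t)]\diamondsuit^{\beta}t=\int_{a}^{b}f(t)\diamondsuit^{\beta}t+\int_{a}^{b}g(t)\diamondsuit^{\beta}t,\qquad \int_{a}^{b}(\lambda f)(t)\diamondsuit^{\beta}t=\lambda\int_{a}^{b}f(t)\diamondsuit^{\beta}t.
\end{equation*}

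Items $(iii)$, $(iv)$, $(v)$ are purely algebraic manipulations of the four-term boundary expression. For $(iii)$, interchanging $a$ and $b$ changes the overall sign, so $\int_{a}^{b}f(t)\diamondsuit^{\beta}t=-\int_{b}^{a}f(t)\diamondsuit^{\beta}t$. For $(iv)$, writing each difference $\gamma_{i}(b)F(b)-\gamma_{i}(a)F(a)$ as $[\gamma_{i}(b)F(b)-\gamma_{i}(c)F(c)]+[\gamma_{i}(c)F(c)-\gamma_{i}(a)F(a)]$ and collecting the two resulting groups of four terms yields $\int_{a}^{b}f(t)\diamondsuit^{\beta}t=\int_{a}^{c}f(t)\diamondsuit^{\beta}t+\int_{c}^{b}f(t)\diamondsuit^{\beta}t$. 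For $(v)$, setting $b=a$ leaves a difference of identical terms, hence $0$. The only genuinely delicate point, and the step I would take most care over, is the bookkeeping of the constant of integration entering the indefinite (fractional) integrals: one must verify that it drops out wherever it occurs — automatically through the fractional derivative $(\cdot)^{\nabla^{(1-\beta)}}$ (resp. $(\cdot)^{\Delta^{(1-\beta)}}$) when $1-\beta>0$, and otherwise by fixing a single choice of antiderivative consistently across $f$, $g$, $f+g$ and $\lambda f$ — after which the manipulations above are entirely routine.
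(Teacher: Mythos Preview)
Your proposal is correct and follows exactly the approach of the paper, which consists of a single sentence invoking Definition~\ref{def:intFracCauchy sym} together with the nabla/delta fractional integral properties of Theorem~\ref{T4}. You have simply unpacked in detail what the paper leaves implicit, including the careful remark on the constant of integration, which the paper does not address at all.
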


\begin{proof}
Equalities (i)--(v) follow from Definition~\ref{def:intFracCauchy sym}
and analogous properties of the nabla and delta fractional integrals
(cf. Theorem~\ref{T4}).
\end{proof}

% -------------------------------------------

\section{Conclusion}
\label{sec:Conc}

Fractional calculus, that is, the study of differentiation and integration
of noninteger order, is here extended, via the recent and powerful calculus
on time scales, to include, in a single theory, discrete, continuous
and hybrid fractional calculi. Both nonsymmetric and symmetric fractional
derivatives and integrals on an arbitrary nonempty closed subset of the real
numbers are introduced, and their fundamental properties derived.
It is shown that a function may be fractional differentiable
but not differentiable; and that a function may be symmetric
fractional differentiable but not fractional differentiable.
A relation between the nonsymmetric and symmetric fractional
derivatives is also derived. In particular,
our time-scale symmetric fractional calculus
of order $\alpha\in]0,1]$ gives, when $\alpha =1$,
the recent results of \cite{Cruz:2}; while for $\alpha =1$
the delta and nabla nonsymmetric fractional calculi
reduce to the the usual delta and nabla calculus on time scales, respectively.

We have only introduced some fundamental concepts and proved some
basic properties. Much remains to be done in order to develop the theory
here initiated. In particular, it would be interesting to investigate
the usefulness of the new fractional calculi in applications
to real world problems, where the time scale is partially
continuous and partially discrete with a time-varying graininess function.
This and other questions will be subject of future research.

% -------------------------------------------

\section*{Acknowledgments}

This work is part of first author's PhD,
which is carried out at Sidi Bel Abbes University, Algeria.
It was partially supported by the \emph{Center for Research
and Development in Mathematics and Applications} (CIDMA)
and the \emph{Portuguese Foundation for Science and Technology} (FCT),
within project UID/MAT/04106/2013. The authors are grateful to two
anonymous referees for important comments and suggestions.

% -------------------------------------------

% -------------------------------------------------------------

\end{document}